\numberwithin{equation}{section} 
\newtheorem{theorem}{Theorem}[section]
\newtheorem{lemma}[theorem]{Lemma}
\newtheorem{corollary}[theorem]{Corollary}
\theoremstyle{definition}\newtheorem{definition}[theorem]{Definition}
\theoremstyle{definition}\newtheorem{example}[theorem]{Example}
\newtheorem{conjecture}[theorem]{Conjecture}
\newtheorem{proposition}[theorem]{Proposition}
\newtheorem{problem}[theorem]{Problem}
\theoremstyle{definition}
\theoremstyle{definition}\newtheorem{remarks}[theorem]{Remarks}
\theoremstyle{definition}
\theoremstyle{definition}\newtheorem*{acknowledgments}{Acknowledgments}
\theoremstyle{definition}\newtheorem*{question}{Question}
\newcommand{\al}{\alpha}
\newcommand{\be}{\beta}
\newcommand{\ga}{\gamma}
\newcommand{\Ga}{\Gamma}
\newcommand{\del}{\delta}
\newcommand{\lam}{\lambda}
\newcommand{\eps}{\epsilon}
\newcommand{\Om}{\Omega}
\newcommand{\cP}{\mathcal{P}}
\newcommand{\bC}{\mathbb{C}}
\newcommand{\bR}{\mathbb{R}}
\newcommand{\bZ}{\mathbb{Z}}
\newcommand{\bQ}{\mathbb{Q}}
\newcommand{\bN}{\mathbb{N}}
\newcommand{\bT}{\mathbb{T}}
\newcommand{\bk}{\textbf{k}}
\newcommand{\bm}{\textbf{m}}
\newcommand{\bp}{\textbf{p}}
\newcommand{\bq}{\textbf{q}}
\newcommand{\bx}{\textbf{x}}
\newcommand{\by}{\textbf{y}}
\newcommand{\SL}{\operatorname{SL}}
\newcommand\norm[1]{\left\|#1\right\|}
\newcommand\set[1]{\left\{#1\right\}}
\newcommand\pa[1]{\left(#1\right)}
\newcommand\idist[1]{\langle#1\rangle}
\newcommand\av[1]{\left|#1\right|}
\newcommand\on[1]{\operatorname{#1}}
\newcommand\diag[1]{\operatorname{diag}\left(#1\right)}
\newcommand{\onto}{\xymatrix{\ar@{>>}[r]&}}
\newcommand{\da}[4]{\xymatrix{#1 \ar@<.5ex>[r]^{#2} \ar@<-.5ex>[r]_{#3} & #4}}
\begin{document}
\title{Grids with dense values}
\author{Uri Shapira}
\thanks{* The author acknowledges the partial support of ISF grant number 1157/08}
\begin{abstract}
Given a continuous function from Euclidean space to the real line, we analyze (under some natural assumption on the function), the set of values
it takes on translates of lattices. Our results are of the flavor: For almost any translate, the set of values is dense in the set of possible values. The results are then applied
to a variety of concrete examples, obtaining new information in classical discussions in different areas in mathematics; in particular, Minkowski's conjecture regarding products of inhomogeneous forms and inhomogeneous Diophantine approximations.
\end{abstract}
\maketitle
\section{Introduction}\label{introduction}
Given a continuous function $F:\bR^d\to \bR$, it is a natural question in number theory, to try and analyze the set of values $F$ takes on points 
of a lattice in $\bR^d$. Up to a linear change of variable, this question is of course equivalent to analyzing the values $F$ takes at integer points.
We shall be interested in an inhomogeneous variant of this discussion; we try to analyze the set of values $F$ takes on grids, that is on translated
lattices. We approach this discussion from a dynamical point of view which leads us to impose some natural assumptions on the function $F$ under 
consideration. We present a variety of concrete examples in \S~\ref{main results} and applications to classical discussions in Diophantine approximations and the geometry of numbers in \S~\ref{app 1}. The main tools we develop to derive our results, are the Mixing and the Coset lemmas, appearing in \S~\ref{tools}. The discussion in \S~\ref{tools} is concerned with closures of certain random sequences on the $d$-torus. It is independent of the rest of 
this paper and may be of independent interest on its own.  
\subsection{Basic notions}
The basic objects we shall work with are lattices and grids. As we wish to exploit dynamical methods, it is most convenient to present spaces, the points of which are lattices and grids.  	
We set $G=\SL_d(\bR), \Ga=\SL_d(\bZ)$, and let $X_d=G/\Ga$. A point $g\Ga\in X_d$ will be denoted by $\bar{g}$. The space $X_d$ can be identified with the space of unimodular (i.e.\ of covolume 1) lattices in $\bR^d$. The identification is defined by associating to $\bar{g}\in X_d$ the lattice spanned by the columns of $g$. We let $Y_d=\pa{G\ltimes \bR^d}/\pa{\Ga\ltimes \bZ^d}$. The space $Y_d$ is identified with the space of unimodular \textit{grids} in $\bR^d$, where by a grid we mean a set of 
the form $\bar{g}+v=\set{w+v:w\in\bar{g}}.$ Points of $Y_d$ are denoted by $\bar{g}+v$ where $g\in G$ and $v\in\bR^d$. We endow $X_d,Y_d$ with the quotient topology, induced from the usual topology on $G$ and $G\ltimes \bR^d$ respectively. In fact, $X_d,Y_d$ are smooth manifolds and inherit a complete Riemannian metric from the corresponding group. It will be convenient to interchangeably think of the points of $X_d,Y_d$, as subsets of $\bR^d$, or as points in the corresponding manifold. We shall denote points of $X_d,Y_d$ by lower case letters, $x, x_1,y,y'$ etc. It is a good exercise
for the reader who is not familiar with the topologies introduced above, to work out the meaning of two lattices (resp.\ grids) being close to each other, when thinking of them as subsets in $\bR^d$. This boils down to saying that in a very large box, centered at the origin, the corresponding two sets are close in the usual sense. 

There is an obvious projection $\pi:Y_d\to X_d$ given by $\pi(\bar{g}+v)=\bar{g}$. The fiber $\pi^{-1}(x)$, for $x\in X_d$, is naturally identified with the torus $\bR^d/x$. Note also that $X_d$ naturally embeds in $Y_d$. 
For a continuous function $F:\bR^d\to \bR$, we define for each $y\in Y_d$ the \textit{value set}
\begin{equation}\label{value set}
V_F(y)=\set{F(v):v\in y}.
\end{equation}
In general, it is of interest to analyze for a fixed grid $y\in Y_d$, the value set $V_F(y)$. In particular, one would like to answer questions such as: Is
the value set dense, discrete, or does it contain zero in its closure. We make the following definition which will be of most interest to us in this paper.
\begin{definition}\label{DV} 
Consider a fixed continuous function $F:\bR^d\to\bR$.
\begin{enumerate}
	\item A grid $y\in Y_d$, is DV (dense values), if the value set, $V_F(y)$, is dense in the image $F(\bR^d)$.
	\item A lattice $x\in X_d$ is grid-DV, if all its grids are DV.
	\item Given a lattice $x\in X_d$, and a probability measure $\mu$, on the torus $\pi^{-1}(x)$, we say that $x$ is $\mu$-almost surely grid-DV, if 
	$\mu$-almost any grid $y\in\pi^{-1}(x)$, is DV (we sometime express this by saying that $x$ is almost surely grid-DV with respect to $\mu$) .
\end{enumerate}
\end{definition} 
In general, it is a hard problem to decide for a specific function $F$, whether or not there exist grid-DV lattices, although in some concrete examples the answer is known (see \S~\ref{main results}). On the other hand, the existence of almost surely grid-DV lattices (with respect to some natural family of measures) is guaranteed, once some reasonable assumptions on $F$ are imposed.  
In this paper we find sufficient conditions for a lattice $x\in X_d$ to be almost surely grid-DV with respect to some measures which we now turn to describe. 
\subsection{Haar measures of subtori}\label{H-S-T}
Let $x\in X_d$ be given, we say that a subspace $U<\bR^d$, is rational with respect to $x$, if $x\cap U$ is a lattice in $U$. The rational subspaces are in one to one correspondence with the closed connected subgroups of the torus $\pi^{-1}(x)$. We refer to these as \textit{subtori} and denote the subtorus corresponding to a rational subspace, $U$, by $U+x$. 
The Haar probability measure on such a subtorus is denoted by $\lam_U$. For $v\in\bR^d$ and a subtorus $U+x$, we denote by $v+(U+x)$ the coset of 
$v$ with respect to the subtorus. The translation of the Haar measure $\lam_U$ by $v$, supported on the coset, is denoted by $\lam_{v+U}$. When $U=\bR^d$, we sometime refer to $\lam_U$ as Lebesgue measure. 
\subsection{The invariance group}
Let  
\begin{align*}
\textbf{H}_F&=\set{g\in G: F\circ g=F},\\
H_F&=\textbf{H}^\circ_F.
\end{align*}
Here $\textbf{H}^\circ_F$ denotes the connected component of the identity in $\textbf{H}_F$. The group $H_F$ will be referred to as the \textit{invariance group} of $F$. It is a closed subgroup of $G$ and in most interesting cases it is (the connected component of the identity of ) an algebraic group. This happens when $F$ is a polynomial, but also in some other cases too.
We say that $F$ is \textit{noncompact} if $H_F$ is noncompact . As our approach is dynamical, we shall only discuss the noncompact case. 

There are natural actions (by left translation) of $G$ on $X_d,Y_d$. These actions commute with $\pi$. When thinking of the 
points of $X_d,Y_d$ as subsets of $\bR^d$, these actions are induced by the linear action of $G$ on $\bR^d$. 
The importance of the invariance group to our discussion is that it leaves the value sets invariant; for any $y\in Y_d$ and any
$h\in H_F$,
\begin{align}\label{inv 1}
V_F(y)=V_F(hy).
\end{align}
The following lemma illustrates how dynamics comes into the game. The reader should deduce it from equation ~\eqref{inv 1}, the continuity of $F$, 
and the topology of $Y_d$:
\begin{lemma}[Inheritance]\label{inheritance}
Let $y,y_0\in Y_d$ be such that $y_0\in\overline{H_Fy}$. Then $\overline{V_F(y_0)}\subset \overline{V_F(y)}.$
In particular, if $y_0$ is DV, then so is $y$. If $y$ has a discrete value set, then so does
$y_0$.
\end{lemma}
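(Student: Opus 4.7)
The plan is to verify the inclusion $\overline{V_F(y_0)}\subset\overline{V_F(y)}$ pointwise. Pick $F(v_0)\in V_F(y_0)$ with $v_0\in y_0$. By hypothesis there is a sequence $h_n\in H_F$ with $h_n y\to y_0$ in $Y_d$. The description of the topology of $Y_d$ recalled in the introduction --- two grids are close iff they nearly coincide, as subsets of $\bR^d$, in a sufficiently large box around the origin --- lets me choose, for the fixed $v_0\in y_0$, approximating points $v_n\in h_n y$ with $v_n\to v_0$ in $\bR^d$. Writing $v_n=h_n w_n$ with $w_n\in y$ and applying $F\circ h_n=F$ from \eqref{inv 1} together with continuity of $F$, I obtain $F(w_n)=F(v_n)\to F(v_0)$. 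Each $F(w_n)$ lies in $V_F(y)$, so $F(v_0)\in\overline{V_F(y)}$; varying $v_0\in y_0$ and taking closures yields the stated inclusion.

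The two consequences fall out immediately. If $y_0$ is DV then $F(\bR^d)=\overline{V_F(y_0)}\subset\overline{V_F(y)}\subset F(\bR^d)$, so $y$ is DV as well. If $V_F(y)$ is discrete --- read, as is standard in this context, as locally finite in $\bR$, hence equal to its own closure --- then $V_F(y_0)\subset\overline{V_F(y_0)}\subset\overline{V_F(y)}=V_F(y)$ is contained in a locally finite set and is therefore itself discrete.

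The only technical point on which the argument rests is the extraction of the approximating sequence $v_n\to v_0$ in $\bR^d$. I would justify it by lifting the convergence $h_n y\to y_0$ from $Y_d$ to $G\ltimes\bR^d$: pick representatives $(g_n,u_n)$ and $(g_0,u_0)$ of $h_n y$ and $y_0$, and after adjusting each $(g_n,u_n)$ by a suitable element of $\Ga\ltimes\bZ^d$ arrange $(g_n,u_n)\to(g_0,u_0)$ in $G\ltimes\bR^d$. Writing $v_0=g_0 m+u_0$ for the appropriate $m\in\bZ^d$, the points $v_n:=g_n m+u_n\in h_n y$ then converge to $v_0$, and the rest of the proof is continuity of $F$ together with \eqref{inv 1}.
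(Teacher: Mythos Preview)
Your proof is correct and follows exactly the route the paper indicates: the paper does not spell out a proof but leaves the lemma to the reader, hinting at \eqref{inv 1}, continuity of $F$, and the description of the topology of $Y_d$, and this is precisely what you carry out (including the clean lift to $G\ltimes\bR^d$ for the approximation step). One cosmetic slip: the chain $F(\bR^d)=\overline{V_F(y_0)}\subset\overline{V_F(y)}\subset F(\bR^d)$ in your DV consequence tacitly assumes $F(\bR^d)$ is closed; what you actually get and actually need is $F(\bR^d)\subset\overline{V_F(y_0)}\subset\overline{V_F(y)}$, which already gives density of $V_F(y)$ in $F(\bR^d)$.
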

As $\SL_d(\bZ)$ is a lattice in $\SL_d(\bR)$, the space $X_d$ carries a $G$-invariant probability measure. Similarly, $Y_d$ carries a $G\ltimes\bR^d$-invariant probability measure. We shall refer to both these measures as the Haar measures. It is not hard to show that the 
Haar measure on $Y_d$ disintegrates to the Lebesgue measures
on the fibers $\set{\pi^{-1}(x):x\in X_d}$, with respect to the Haar measure on $X_d$. The Howe-Moore theorem asserts in this case, that the $G$-actions on $X_d,Y_d$ are mixing with respect to these measures. As we assume that $H_F$ is noncompact, it acts mixingly and in particular, ergodically on both spaces. It follows in particular, that for almost any grid $y\in Y_d$, $H_Fy$ is dense in $Y_d$. This gives us the following immediate corollary
of the inheritance lemma
\begin{corollary}\label{ergo}
Almost any $y\in Y_d$ is DV. Moreover, almost any lattice, $x\in X_d$, is almost surely grid-DV with respect to the Lebesgue measure of the full
torus $\pi^{-1}(x)$.
\end{corollary}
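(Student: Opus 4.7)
The plan is to deduce both statements by combining the Howe--Moore mixing/ergodicity of $H_F$ acting on $Y_d$ with the Inheritance Lemma, and then transferring the full-measure conclusion on $Y_d$ to individual fibers via a Fubini argument using the explicit disintegration of Haar measure noted in the text.

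For the first statement I would first reduce to showing that whenever the $H_F$-orbit $H_F y$ is dense in $Y_d$, the grid $y$ is DV. Since $H_F$ is noncompact, Howe--Moore (already invoked above) gives that the $H_F$-action on $Y_d$ is mixing, hence ergodic, with respect to Haar measure; on the second-countable space $Y_d$ this forces Haar-a.e.\ $y$ to satisfy $\overline{H_F y}=Y_d$. So fix such a $y$ and any target value $t\in F(\bR^d)$. The key device is to realize $t$ inside the value set of a concrete grid: write $t=F(v_0)$ for some $v_0\in\bR^d$ and form $y_0=\bZ^d+v_0\in Y_d$; since $0\in\bZ^d$ we have $v_0\in y_0$, so $t\in V_F(y_0)$. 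Because $y_0\in Y_d=\overline{H_F y}$, the Inheritance Lemma gives $t\in V_F(y_0)\subset\overline{V_F(y)}$, and since $t$ was arbitrary, $V_F(y)$ is dense in $F(\bR^d)$.

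For the second statement I would invoke the stated disintegration of the Haar measure on $Y_d$ as an integral, with respect to the Haar measure on $X_d$, of the Lebesgue measures on the fibers $\pi^{-1}(x)$. The set $N\subset Y_d$ of non-DV grids is a countable intersection of open sets (run the openness test ``there exists $w\in y$ with $|F(w)-t_n|<\eps$'' over a countable dense subset $\{t_n\}\subset F(\bR^d)$ and rational $\eps>0$), hence Borel, and by the first part it has Haar measure zero. Fubini applied to the disintegration then gives that for Haar-a.e.\ $x\in X_d$ the slice $N\cap\pi^{-1}(x)$ has Lebesgue measure zero, which is precisely the assertion that such an $x$ is almost surely grid-DV with respect to the Lebesgue measure on the full torus $\pi^{-1}(x)$.

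The only substantive step is the passage from orbit density to value-set density, and it rests on the simple but essential observation that an arbitrary $t\in F(\bR^d)$ can always be exhibited as a value of $F$ on the explicit grid $\bZ^d+v_0\in Y_d$; the Inheritance Lemma then pulls $t$ into $\overline{V_F(y)}$ via the orbit closure. Beyond correctly invoking Howe--Moore (for ergodicity) and the disintegration (for Fubini), no genuine obstacle arises.
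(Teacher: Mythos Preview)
Your argument is correct and matches the paper's approach exactly: Howe--Moore gives ergodicity and hence a.e.\ dense $H_F$-orbits in $Y_d$, the Inheritance Lemma together with the observation that every $t\in F(\bR^d)$ is realized on the concrete grid $\bZ^d+v_0$ yields the DV conclusion, and the disintegration of Haar measure handles the fiberwise ``Moreover'' statement via Fubini. One small slip: it is the set of DV grids, not $N$, that is the $G_\delta$ (the condition ``$\exists w\in y$ with $|F(w)-t_n|<\eps$'' is open and DV is the intersection of these); $N$ is therefore an $F_\sigma$, but this does not affect the Borel measurability you need.
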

In this paper we wish to point out a fairly general connection between the dynamical behavior of a lattice $x\in X_d$ under the action of the 
invariance group, and the value set, $V_F(y)$, along a fiber $\pi^{-1}(x)$. Our aim is to sharpen Corollary ~\ref{ergo} and to develop better understanding of the set of almost surely
grid-DV lattices. 

Before ending this introduction, let us make the following definitions which we need in order to state our results. 
We say that a sequence in a topological space is divergent, if it has no converging subsequences. An orbit $H_Fx$, is said to be divergent if for any divergent sequence $h_n\in H_F$, the sequence $h_nx$ is divergent in $X_d$ (i.e.\ if the orbit map is proper).  
\begin{definition}\label{nondeg}
A continuous function $F:\bR^d\to\bR$ is \textit{nondegenerate} if for any nontrivial subspace $U<\bR^d$ and any grid $y\in Y_d$, one has
\begin{align}\label{should be}
\set{F(u+v):u\in U,v\in y}=F(\bR^d).
\end{align}
\end{definition}
A typical example of a degenerate function, which we wish to avoid, is a polynomial with coefficients in $\bZ$, which does not depend on one of the variables (the simplest example is $F:\bR^2\to\bR$, given by $F(v_1,v_2)=v_1$). In this case, choosing the grid to be simply $\bZ^d$ and $U$ to be the 
line corresponding to the variable that does not appear in $F$, we see that the set in ~\eqref{should be}, is discrete.
\begin{definition}\label{a.f}
A sequence $h_n\in G$ is said to be \textit{almost finite} with respect to a subspace $U<\bR^d$, if there exist a  a sequence 
$\eps_n\in G$ with $\eps_n\to e$, such that the set of restrictions $\set{\eps_nh_n|_U}$ is finite.
\end{definition}
Note that any diverging sequence, $h_n$, is not almost finite with respect to $\bR^d$. Also, if $h_n$ are diagonalizable, and the eigenvalues approach
either $0$ or $\infty$, then $h_n$ is not almost finite with respect to any nontrivial subspace.
\section{Examples and results}\label{main results}
\subsection{Main theorem} The following theorem is a simplified version of our main result. This version is sufficiently strong for most of our applications.
\begin{theorem}\label{main cor 1}
Let $F:\bR^d\to \bR$ be nondegenerate and noncompact. Let $x\in X_d$ be a lattice with a nondivergent $H_F$-orbit and let $\lam$ be the Lebesgue measure
of the full torus $\pi^{-1}(x)$. Then $x$ is $\lam$-almost surely grid-DV.
\end{theorem}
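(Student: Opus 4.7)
The plan is to combine the nondivergence of $H_F x$ with the Mixing and Coset lemmas of \S~\ref{tools}, the inheritance lemma, and the nondegeneracy of $F$. The strategy is to exhibit inside $\overline{H_F y}$ a coset of a \emph{nontrivial} subtorus of some fiber $\pi^{-1}(x_0)$ for a $\lam$-typical grid $y\in\pi^{-1}(x)$, and then use inheritance and nondegeneracy to pass from this structured orbit closure to density of $V_F(y)$.

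First, I use the nondivergence hypothesis to choose a divergent sequence $h_n\in H_F$ with $h_nx\to x_0$ for some $x_0\in X_d$ (passing to a subsequence if needed). For a grid $y=x+v$ one has $h_ny=h_nx+h_nv$, and any accumulation point of $h_ny$ in $Y_d$ lies over $x_0$, determined by the asymptotic behavior of $h_nv$ modulo $h_nx$. The Mixing and Coset lemmas of \S~\ref{tools} are designed precisely for this: applied to the random translate $v\in\pi^{-1}(x)$ sampled from $\lam$, they should yield a rational subspace $U<\bR^d$ relative to $x_0$ and, for $\lam$-a.e.\ $v$, a translate $w_0=w_0(v)$ so that the entire coset $w_0+(U+x_0)\subset\pi^{-1}(x_0)$ lies in the closure of the sequence $h_ny$ inside $Y_d$. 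The subspace $U$ is nontrivial because the sequence $h_n$ diverges in $G$, hence is not almost finite with respect to $\bR^d$.

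Given such a generic $v$, set $y=x+v$. Then $\overline{H_Fy}$ contains every grid of the form $x_0+w_0+u$ with $u\in U$, so by the inheritance lemma,
\begin{align*}
\overline{V_F(y)}\supset \bigcup_{u\in U}V_F(x_0+w_0+u) = \set{F(p+u):p\in x_0+w_0,\ u\in U}.
\end{align*}
Applying the nondegeneracy of $F$ to the grid $x_0+w_0$ and the nontrivial subspace $U$, the right-hand side equals $F(\bR^d)$. Hence $V_F(y)$ is dense in $F(\bR^d)$, so $y$ is DV; since this holds for $\lam$-a.e.\ $v$, the lattice $x$ is $\lam$-almost surely grid-DV.

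The main obstacle is the middle step: producing, for $\lam$-typical $v$, a coset of a nontrivial subtorus of $\pi^{-1}(x_0)$ as a limit set, while the ambient lattice $h_nx$ itself moves with $n$. This is precisely what the Mixing and Coset lemmas are engineered to deliver, so the bulk of the real technical work is shouldered by those results in \S~\ref{tools} rather than by the proof of the theorem itself.
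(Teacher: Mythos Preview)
Your proposal is correct and follows essentially the same route as the paper: the paper derives Theorem~\ref{main cor 1} from Theorem~\ref{main theorem} applied with $U=\bR^d$ (noting that a divergent sequence is never almost finite with respect to $\bR^d$), and the proof of Theorem~\ref{main theorem} is exactly the chain you describe---Lemma~\ref{coset 2} (the varying-lattice version of the Coset lemma) produces a nontrivial coset $z_y+V\subset\pi^{-1}(x_0)$ inside $\overline{H_Fy}$ for $\lam$-a.e.\ $y$, and then inheritance together with nondegeneracy gives $\overline{V_F(y)}=F(\bR^d)$. Your remark that the heavy lifting is done by the Mixing and Coset lemmas is precisely how the paper is organized.
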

Theorem ~\ref{main cor 1} follows from the following theorem which is the main result in this paper. It is  proved in \S~\ref{pfs}. 
\begin{theorem}\label{main theorem}
Let $F:\bR^d\to \bR$ be nondegenerate and noncompact. Let $x\in X_d$ be a lattice with a nondivergent $H_F$-orbit and let $\lam_{w+U}$ be the translation of the Haar measure supported on the subtorus $U+x$ by a vector $w\in\bR^d$. Then, if there exists a divergent sequence $h_n\in H_F$, such that $h_nx$ converges and $h_n$ is not almost finite with respect to $U$, then $x$ is $\lam_{w+U}$-almost surely grid-DV.  
\end{theorem}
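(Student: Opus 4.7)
The plan is to combine the Inheritance Lemma with the nondegeneracy of $F$, after producing enough accumulation points in the orbit closure of $\lam_{w+U}$-almost every grid. The noncompactness of $H_F$ enters only through the hypothesis supplying the divergent sequence $h_n$; the genuinely new work is a torus-dynamics statement.

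First I would carry out a reduction: it suffices to show that for $\lam_{w+U}$-almost every grid $y\in w+U+x$, there exist $x'\in X_d$, a nontrivial subspace $U'<\bR^d$ rational with respect to $x'$, and a vector $w'\in\bR^d$, such that the entire coset $w'+U'+x'\subset\pi^{-1}(x')$ lies in $\overline{H_F y}$. Given such an inclusion, Lemma~\ref{inheritance} yields $\overline{V_F(y_0)}\subset\overline{V_F(y)}$ for every $y_0\in w'+U'+x'$, and a direct computation gives
$$\bigcup_{v'\in U'}V_F(w'+v'+x')=\set{F(u+v):u\in U',\ v\in w'+x'}=F(\bR^d),$$
the last equality being the nondegeneracy of $F$ applied to the grid $w'+x'$ and the nontrivial subspace $U'$. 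Consequently $\overline{V_F(y)}\supset F(\bR^d)$, so $y$ is DV.

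The dynamical core is the production of such a subtorus coset inside $\overline{H_F y}$. Fix a divergent $h_n\in H_F$ with $h_nx\to x'$ in $X_d$ and $h_n$ not almost finite with respect to $U$. For $y=w+u+x$ with $u\in U+x$ we have $h_ny=h_nw+h_nu+h_nx$, and any subsequential limit lies in $\pi^{-1}(x')$. Using the convergence $h_nx\to x'$ to identify the nearby fibers $\pi^{-1}(h_nx)$ with $\pi^{-1}(x')$, the remaining question becomes: for $\lam_U$-almost every $u$, does the sequence $[h_nw+h_nu]$ in the torus $\bR^d/x'$ accumulate on a coset of a positive-dimensional subtorus? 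This is precisely the type of question addressed by the Mixing and Coset Lemmas of \S~\ref{tools}, which analyze closures of random sequences on the $d$-torus arising from affine maps; the hypothesis that $h_n$ is not almost finite with respect to $U$ is the exact genericity condition needed to invoke them, since it prevents the restrictions $h_n|_U$ from collapsing, up to a bounded perturbation, into a finite family, which is the only obstruction to positive-dimensional almost-sure accumulation.

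The main obstacle, as this plan makes clear, is precisely this last step: showing that the affine images of a Haar-random point of $U+x$ almost surely accumulate on a positive-dimensional subtorus coset of $\bR^d/x'$ under the not-almost-finite hypothesis. In the present plan it is invoked as a black box, and proving the Mixing and Coset Lemmas themselves is where the genuine technical difficulty lies; the rest is the clean combination of inheritance and nondegeneracy sketched in the reduction above.
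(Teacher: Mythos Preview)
Your proposal is correct and follows essentially the same route as the paper: the reduction via the Inheritance Lemma and nondegeneracy is exactly the paper's argument, and the ``dynamical core'' you describe is precisely Lemma~\ref{coset 2}, the reformulation of the Coset Lemma for a convergent sequence $h_nx\to x'$, whose hypothesis is the not-almost-finite condition. The only minor sharpening the paper makes explicit is that the limit lattice $x'$ and the subspace $V$ can be taken uniform in $y$ (only the coset representative depends on $y$), but your weaker formulation already suffices for the conclusion.
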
 
In some examples (as will be seen below), the fact that the sequence $h_n$ is not almost finite follows automatically from properties of the invariance
group. 
%
%
%
%
We now turn to apply these results to a variety of concrete examples.
\subsection{Rank one}\label{rank one}
The following family of examples is particularly relevant to Diophantine approximations (see \S\ref{app 2}). Let $m,n$ be positive integers such that $m+n=d$. Let us denote vectors in $\bR^d$ as column vectors $(\bx,\by)^t$ (here $t$ stands for transpose), where $\bx\in\bR^n,\by\in\bR^m$. Let $P_{n,m}:\bR^d\to\bR$ be defined by $P_{n,m}(\bx,\by)=\norm{\bx}_2^n\norm{\by}_2^m$ (where we denote here by $\norm{\cdot}_2$, the Euclidean norm on the corresponding space). Denoting the invariance group by $H_{n,m}$, it is not hard to see that
$H_{n,m}=\on{SO}(n)\times\on{SO}(m)\times\set{a_{n,m}(t)}_{t\in\bR}$, where 
\begin{equation}\label{anm}
a_{n,m}(t)=\diag{\underbrace{e^{mt},\dots,e^{mt}}_n,\underbrace{e^{-nt},\dots,e^{-nt}}_m}.
\end{equation}
It is not hard to see that $P_{n,m}$ is nondegenerate and noncompact, hence Theorem ~\ref{main theorem} applies. In fact, we have the
following theorem.
\begin{theorem}\label{rank 1}
Let $x\in X_d$ be a lattice with a nondivergent $H_{n,m}$-orbit. Then for any subspace $\set{0}\ne U<\bR^d$, rational with respect to $x$, and any $w\in \bR^d$, $x$ is $\lam_{w+U}$-almost surely grid-DV.
\end{theorem}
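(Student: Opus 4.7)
The plan is to apply Theorem~\ref{main theorem} to each nonzero subspace $U<\bR^d$ rational with respect to $x$ and each $w\in\bR^d$; what I must produce is a divergent sequence $h_n\in H_{n,m}$ along which $h_nx$ converges and which fails to be almost finite with respect to $U$. Nondivergence of $H_{n,m}x$ already yields a divergent sequence with $h_nx$ convergent, so the substantive task is to check that \emph{any} divergent sequence in $H_{n,m}$ is automatically not almost finite on a nonzero $U$.

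Using the explicit description $H_{n,m}=(\on{SO}(n)\times\on{SO}(m))\cdot\{a_{n,m}(t)\}_{t\in\bR}$, I write $h_n=k_na_{n,m}(t_n)$. Compactness of the rotation factor forces $\av{t_n}\to\infty$ along any divergent subsequence; I may therefore assume $t_n\to+\infty$, the opposite case being symmetric. Suppose for contradiction that $h_n$ is almost finite on $U$, so some $\eps_n\to e$ in $G$ makes $\set{\eps_nh_n|_U}$ finite; after passing to a subsequence this set is a single restriction $g|_U$ of a fixed $g\in G$, and hence $h_n|_U=\eps_n^{-1}g|_U$. Since $\eps_n^{-1}\to e$ and $g$ is invertible, this pins the operator norm of $h_n|_U$ to remain bounded and its smallest singular value to stay bounded away from zero.

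Because $k_n$ is a Euclidean isometry, the same bounds hold for $a_{n,m}(t_n)|_U$. Decomposing $v=v_1+v_2\in\bR^n\oplus\bR^m$, one has $\norm{a_{n,m}(t_n)v}^2=e^{2mt_n}\norm{v_1}^2+e^{-2nt_n}\norm{v_2}^2$, so either $U$ contains a vector with $v_1\ne 0$, forcing the operator norm of $a_{n,m}(t_n)|_U$ to infinity, or $U\subset\bR^m$ and $a_{n,m}(t_n)|_U$ is $e^{-nt_n}$ times an isometry, sending its smallest singular value to zero. Either alternative contradicts the bounds above, so $h_n$ is not almost finite with respect to $U$ and Theorem~\ref{main theorem} applies. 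The one step demanding care is the brief linear-algebra unpacking in the second paragraph of the almost finiteness condition into uniform two-sided bounds on $h_n|_U$; once that is in place, the case analysis along the expanding and contracting eigenspaces of $a_{n,m}(t_n)$ is immediate.
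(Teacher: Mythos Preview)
Your proof is correct and follows essentially the same route as the paper's: both arguments reduce to Theorem~\ref{main theorem} and verify non--almost finiteness via the eigenvalue behaviour of $a_{n,m}(t_n)$. The paper is terser---it simply replaces $h_n$ by $a_{n,m}(t_n)$ (absorbing the compact factor) and then cites the remark after Definition~\ref{a.f} that diagonal sequences whose eigenvalues tend to $0$ or $\infty$ are never almost finite on a nonzero subspace---whereas you retain $k_n$ and unpack that remark explicitly via the two-sided singular value bounds, but the substance is identical.
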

\begin{proof}
Let $\lam_{w+U}$ be a measure as in the statement. As the orbit $H_{n,m}x$ is nondivergent, there exists a diverging sequence $h_n\in H_F$, such that
$h_nx$ converges. In fact, as $H_{n,m}$ is the product of a compact group with the one parameter group $a_{n,m}(t)$, we may assume that $h_n=a_{n,m}(t_n)$ for some 
$t_n\to \pm\infty$. As the eigenvalues of $a_{n,m}(t_n)$ approach $0$ or $\infty$, it follows that $h_n$ is not almost finite with respect to $U$. Theorem 
~\ref{main theorem} applies and we conclude that $x$ is $\lam_{w+U}$-almost surely grid-DV.
\end{proof}
In  \S\ref{app 2} we apply Theorem ~\ref{rank 1} to Diophantine approximations and in particular to derive new results on nonsingular forms.
We also remark here that because $H_{n,m}$ is of real rank 1, for any lattice $x\in X_d$, there always exist grids which are not DV; that is, there are no grid-DV lattices in this case. This result was proved first by Davenport ~\cite{D} for the case $d=2, n,m=1$, while recently, Einsiedler and Tseng proved (for general $d$, $n=d-1, m=1$) ~\cite{ET} that the set of such grids is of full Hausdorff dimension, and in fact, a winning set for Schmidt's game (see also ~\cite{BFK} for generalizations).  
\subsection{The product of linear forms}\label{product of linear forms}
Let $N:\bR^d\to \bR$ be the function $N(x)=\prod_1^dx_i$. We denote the invariance group $H_N$, by $A$. It is the group of diagonal matrices with positive diagonal entries in $G$. It is not hard to see that $N$ is nondegenerate and noncompact. Hence, Theorem ~\ref{main theorem} and Theorem ~\ref{main cor 1} apply and we know that any lattice with a nondivergent $A$-orbit is almost surely grid-DV with respect to the Haar measure of the full torus. We shall see in the next section that this result
has significant implications towards Minkowki's conjecture. It is worth noting here 
that a classification of the divergent $A$-orbits, due to Margulis, is given in \cite{TW}. It is proved there that $Ax$ is divergent, if and only if, there exists 
$a\in A$, such that the lattice $ax$ is contained in $\bQ^d$.
There are many extra assumptions one can impose on a lattice $x$ to ensure that  $x$ is 
almost surely grid-DV with respect to any nontrivial measure of the form $\lam_{w+U}$. The following two corollaries
are examples of such. The first is proved in \S~\ref{pfs}.
\begin{corollary}\label{product 1}
Let $x\in X_d$ be a lattice which does not contain any vector lying on the hyperplanes of the axes (that is the hyperplanes orthogonal to the standard
basis vectors). Then for any subspace $\set{0}\ne U<\bR^d$, rational with respect to $x$, and for any $w\in\bR^d$, $x$ is $\lam_{w+U}$-almost surely grid-DV.
\end{corollary}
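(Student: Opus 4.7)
The plan is to deduce Corollary \ref{product 1} from Theorem \ref{main theorem} applied to $F=N$ and $H_F=A$. Since $N$ is nondegenerate and noncompact, the remaining hypotheses to check, for each nontrivial rational subspace $U<\bR^d$ and each $w\in\bR^d$, are: (i) the orbit $Ax$ is nondivergent, and (ii) there is a divergent sequence $a_n\in A$ with $a_nx$ convergent which is not almost finite with respect to $U$.

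For (i), I would argue by contradiction using the Margulis classification of divergent $A$-orbits recalled in \S\ref{product of linear forms}: if $Ax$ were divergent, there would exist $a\in A$ with $ax\subset\bQ^d$. I would then invoke the general fact that any full-rank lattice $\Lam\subset\bQ^d$ meets every coordinate hyperplane $\{x_i=0\}$ nontrivially. This follows by noting that the $i$-th coordinate projection of $\Lam$ is a finitely generated subgroup of $\bQ$, hence cyclic and discrete, so its kernel has rank $d-1\geq 1$. Since $A$ preserves coordinate hyperplanes, $x$ itself would then contain vectors on axis hyperplanes, contradicting the hypothesis.

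For (ii), take any divergent $a_n\in A$ with $a_nx$ convergent, guaranteed by (i). The hypothesis on $x$ forces $U\not\subset\{x_i=0\}$ for each $i$, for otherwise $x\cap U$ would consist entirely of axis-hyperplane vectors of $x$. Hence for each $i$ there is a vector $u^{(i)}\in U$ with $u^{(i)}_i\ne 0$. Writing $a_n=\diag{e^{t^{(n)}_1},\dots,e^{t^{(n)}_d}}$ with $\sum_i t^{(n)}_i=0$, the divergence of $a_n$ combined with the trace-zero constraint forces, along a subsequence, some $t^{(n)}_k\to+\infty$. Then $\|a_nu^{(k)}\|\geq e^{t^{(n)}_k}|u^{(k)}_k|\to\infty$ along this subsequence, so $a_n|_U$ is unbounded. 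Almost finiteness of $a_n$ with respect to $U$ would force $a_n|_U$ bounded (as the perturbations $\eps_n\to e$ are bounded and the values $\eps_n a_n|_U$ form a finite set), so $a_n$ is not almost finite with respect to $U$, and Theorem \ref{main theorem} applies.

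The main obstacle is step (i), which rests on the Margulis classification together with the elementary observation that a full-rank lattice in $\bQ^d$ must meet each coordinate hyperplane. Step (ii) is then essentially automatic: admissibility forces every rational subspace to project nontrivially onto each coordinate axis, and the diagonal structure of $A$ converts divergence of $a_n$ directly into unboundedness of $a_n|_U$ and thus into failure of almost finiteness.
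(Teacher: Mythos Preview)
Your proof is correct and follows essentially the same approach as the paper: both deduce nondivergence from the Margulis/Tomanov--Weiss classification (a rational lattice meets every coordinate hyperplane), and both rule out almost finiteness by observing that $U$ cannot sit inside any coordinate hyperplane while some eigenvalue $e^{t_k^{(n)}}$ blows up. Your direct argument that $a_n|_U$ is unbounded is in fact slightly cleaner than the paper's formulation, which asserts $U\subset\{(\underbrace{0,\dots,0}_r,*,\dots,*)\}$ for all diverging coordinates---an overstatement when some $t_i^{(n)}\to -\infty$, though harmless since only one coordinate with $t_k^{(n)}\to+\infty$ is needed.
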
 
\begin{corollary}\label{product 2}
Let $x\in X_d$ be a lattice such that there exists a sequence $$a_n=\diag{e^{t_1^{(n)}},\dots,e^{t_d^{(n)}}}\in A,$$
such that $a_nx$ converges,
and for any $1\le i\le d$, $t_i^{(n)}$ diverges. Then $x$ is almost surely grid-DV with respect to any nontrivial measure $\lam_{w+U}$
\end{corollary}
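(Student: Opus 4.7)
The plan is to reduce the corollary directly to Theorem~\ref{main theorem} by verifying its three hypotheses for $F=N$, the lattice $x$, and an arbitrary nontrivial measure $\lam_{w+U}$. Two of the hypotheses are essentially free: the paper already records that $N$ is nondegenerate and noncompact, and the orbit $Ax$ is nondivergent because the given sequence $a_n$ is itself a divergent sequence in $A$ (since every coordinate sequence $t_i^{(n)}$ diverges, no subsequence of $a_n$ can converge in the diagonal group) while $a_nx$ converges by hypothesis. So the real work is to exhibit, for each given nontrivial $U$ rational with respect to $x$, a divergent sequence $h_n\in A$ such that $h_nx$ converges and $h_n$ is not almost finite with respect to $U$. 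The candidate is $h_n=a_n$ itself.

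The key step is therefore to prove that $a_n$ is not almost finite with respect to any nontrivial subspace $U<\bR^d$. I would argue by contradiction: suppose $\eps_n\to e$ in $G$ with $\{\eps_n a_n|_U\}$ finite. By pigeonhole, passing to a subsequence, $\eps_n a_n|_U$ equals a fixed linear map $L\colon U\to\bR^d$, so $a_n u=\eps_n^{-1}Lu\to Lu$ for every $u\in U$. Passing to a further subsequence, I may assume that for each coordinate $i$, either $t_i^{(n)}\to+\infty$ or $t_i^{(n)}\to-\infty$; let $S$ be the set of indices with $t_i^{(n)}\to+\infty$. For any $u\in U$ and any $i\in S$ with $u_i\ne0$, the $i$-th coordinate of $a_nu$ is $e^{t_i^{(n)}}u_i$, which diverges in absolute value; this contradicts convergence of $a_nu$, so $u_i=0$ for every $i\in S$. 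For $i\notin S$, $(a_nu)_i=e^{t_i^{(n)}}u_i\to 0$. Hence $a_nu\to0$, so $L\equiv 0$ on $U$. But then $\eps_n a_n|_U=0$, and invertibility of $\eps_n$ and $a_n$ forces $U=\{0\}$, contradicting nontriviality.

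With all three hypotheses of Theorem~\ref{main theorem} verified, the conclusion that $x$ is $\lam_{w+U}$-almost surely grid-DV is immediate. The only delicate point is the bookkeeping in the previous paragraph—keeping track, along the subsequence, of the indices where $t_i^{(n)}\to+\infty$ versus $t_i^{(n)}\to-\infty$—but this is purely routine. No new dynamical input beyond Theorem~\ref{main theorem} is required, and the argument runs in parallel with the proof of Theorem~\ref{rank 1}, with the diagonal sequence $a_n$ playing the role that $a_{n,m}(t_n)$ played there.
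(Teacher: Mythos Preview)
Your proposal is correct and follows exactly the paper's approach: the paper's proof simply says the corollary ``follows immediately from Theorem~\ref{main theorem}, as under our assumptions, the sequence $a_n$ is not almost finite with respect to any nontrivial subspace,'' invoking the remark made right after Definition~\ref{a.f}. You have merely supplied the routine details behind that remark (and behind the nondivergence of $Ax$), so there is nothing to compare.
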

\begin{proof}
This follows immediately from Theorem ~\ref{main theorem}, as under our assumptions, the sequence $a_n$ is not almost finite with respect to any
nontrivial subspace.
\end{proof}
Thanks to the classification of divergent $A$-orbits, we have the following theorem which shows what happens for lattices with divergent orbits.
\begin{theorem}\label{disc products}
Let $x\in X_d$ be a lattice with a divergent $A$-orbit. Then $V_N(y)$ is discrete (and moreover $\overline{V_N(y)}=V_N(y)$), for any grid $y\in\pi^{-1}(x)$.
\end{theorem}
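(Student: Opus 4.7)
The plan is to reduce to a concrete counting statement on integer points via the classification of divergent $A$-orbits. By the theorem of Margulis (as recorded in \cite{TW}), a divergent $A$-orbit of $x$ contains a lattice $a_0 x \subset \bQ^d$ for some $a_0 \in A$. Since the product of coordinates is $A$-invariant, $V_N(y)=V_N(a_0 y)$ for every grid $y \in \pi^{-1}(x)$, and $a_0 y \in \pi^{-1}(a_0 x)$; hence I may replace $x$ by $a_0 x$ and assume $x \subset \bQ^d$ from the outset.

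Next, I clear denominators: pick an integer $q \ge 1$ with $L := qx \subset \bZ^d$ (so $L$ is a sublattice of $\bZ^d$ of full rank and covolume $q^d$). Writing $y = x + v$ and setting $b = qv \in \bR^d$, substituting $w = u/q$ for $w \in x$ gives
\[
V_N(y) = q^{-d}\,\phi(L), \qquad \phi(u) := \prod_{i=1}^d (u_i + b_i).
\]
So discreteness and closedness of $V_N(y)$ will follow from the same properties for $\phi(L)\subset \bR$.

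The core step is to show that $\phi(L)$ meets every bounded set in a \emph{finite} set, which immediately yields both discreteness and closedness. For each coordinate $i$ the projection $\pi_i(L)$ is a nonzero discrete subgroup of $\bZ$, say $\pi_i(L) = k_i\bZ$ with $k_i \ge 1$. Split the coordinates according to whether $-b_i$ lies in $\pi_i(L)$: let $J = \{i : -b_i \in \pi_i(L)\}$ and, for $i\in J$, set $u_i^* = -b_i$. Let
\[
L_0 = \{u \in L : u_i = u_i^* \text{ for some } i \in J\}, \qquad L_1 = L \setminus L_0.
\]
On $L_0$ at least one factor of $\phi$ vanishes, so $\phi(L_0) \subset \{0\}$. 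On $L_1$, for each $i$ we have a uniform positive lower bound $|u_i + b_i| \ge c_i$, namely $c_i = k_i$ for $i\in J$ (the minimal nonzero gap in $\pi_i(L)$) and $c_i = \mathrm{dist}(-b_i, \pi_i(L)) > 0$ for $i \notin J$. Given any $M>0$, the bound $|\phi(u)|\le M$ on $L_1$ therefore forces $|u_i + b_i| \le M/\prod_{j\ne i} c_j$ for each $i$, so the coordinates of $u$ are uniformly bounded; since $u\in L\subset\bZ^d$, only finitely many such $u$ exist. Hence $\phi(L) \cap [-M,M]$ is finite, proving that $V_N(y) = q^{-d}\phi(L)$ is discrete and closed.

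The only point requiring care is the bookkeeping around the zero value when $J\ne\emptyset$: one must argue that nonzero values $\phi(u)$ for $u\in L_1$ are uniformly bounded away from $0$ (which they are, by the same product bound $\prod c_i$) so that $0$ is not an accumulation point of $\phi(L)\setminus\{0\}$. Beyond this, the argument is routine, and the theorem follows.
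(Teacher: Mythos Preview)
Your proof is correct and follows the same strategy as the paper: reduce via the classification of divergent $A$-orbits to a rational lattice, then argue discreteness by an elementary one-variable-at-a-time analysis of the factors $u_i+b_i$. The paper reduces further to $\bZ^d$ by commensurability and finishes with a subsequence/contradiction argument, while you scale into $\bZ^d$ and give a direct finiteness bound on $\phi(L)\cap[-M,M]$; these are minor tactical variants of the same idea, and your explicit count is arguably cleaner.
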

\begin{proof}
As the value set does not change along the orbit, we conclude from the classification of divergent $A$-orbits, mentioned above, that we may assume 
that $x\subset \bQ^d$. In fact, it is not hard to see that the validity of the theorem is stable under commensurability; that is, if $x,x'$ are commensurable lattices (i.e.\ $x\cap x'$ is of finite index in both), then the sets $V_N(y)$ are discrete, for any $y\in \pi^{-1}(x)$, if and only if
the same is true for any $y\in\pi^{-1}(x')$. This enables us to assume that $x=\bZ^d$. So, we are left to verify that for any vector $v\in\bR^d$, the set $$V_N(v+\bZ^d)=\set{\prod_1^d(n_i+v_i):n_i\in\bZ},$$
is discrete. To prove this, let $\prod_1^d(n_i^{(k)}+v_i)$ be a converging sequence of elements of $V_N(v+\bZ^d),$ and assume by way or contradiction 
that its elements are distinct (this rules out the possibility of having $n_i^{(k)}+v_i=0$ for more than one $k$). For each $i$, the sequence $(n_i^{(k)}+v_i)$, is discrete. Then we are able to take a subsequence so that for each $i$, along the 
subsequence, either $(n_i^{(k)}+v_i)$ is constant and nonzero, or it diverges. As we assume the original sequence is converging, we must have that the subsequence
is constant from some point which contradicts our assumption.
\end{proof}
In contrast to the the situation presented in the previous subsection (Theorem ~\ref{rank 1}), where nondivergence of the orbit of a lattice under the invariance group was sufficient to ensure that the lattice is almost surely grid-DV with respect to any measure of the form $\lam_{w+U}$, we work out the following example which shows the existence of a lattice $x\in X_3$, with a nondivergent $A$-orbit, and a nontrivial subspace $U<\bR^3$, rational with respect to $x$, such that $x$ is not $\lam_U$-almost surely grid-DV. In fact, we shall see that the value set $V_N(x+u)$ is discrete for any $u\in U$ 
\begin{example}\label{example}
In this example, as we will mix the dimensions, we denote our function $N$ and its invariance group $A$ in dimension $d$ by $N_d$ and $A_d$ respectively.
Let $g_0\in\SL_2(\bR)$ be such that the lattice $\bar{g}_0$ has a compact orbit in $X_2$, under the action of the group $A_2=\set{\diag{e^t,e^{-t}}}$.
It is well known that this is equivalent to saying that the value set $V_{N_2}(\bar{g}_0)$ is discrete and does not contain zero. Let $g=\pa{\begin{array}{ll}
1&0\\
0&g_0
\end{array}}\in \SL_3(\bR)$ (here the zeros stand for the corresponding row and column zero vector in $\bR^2$), and let $U=\set{(*,0,0)^t\in\bR^3}$. Then it is clear
that for any
grid $y=\bar{g}+u$, $u\in U$, the value set $V_{N_3}(y)$, is discrete. Indeed, if $u=(\al,0,0)^t$, then 
$$V_{N_3}(\bar{g}+u)=\set{(n+\al)N_2(w):n\in\bZ,w\in\bar{g}_0},$$
which is clearly discrete as $V_{N_2}(\bar{g}_0)$ is.

To see how this fits with Theorem ~\ref{main theorem}, note that if 
$$a_n=\diag{e^{t_1^{(n)}},e^{t_2^{(n)}},e^{-(t_1^{(n)}+t_2^{(n)})}}$$
is a sequence in $A_3$, satisfying that $a_n\bar{g}$ converges in $X_3$, then it is easy to see that $t_1^{(n)}$ must converge, and so the sequence $a_n$ is almost finite with respect to $U$.
\end{example}
We remark here that for $d\ge 3$, the fact that the invariance group is of higher rank, enabled the author of the present paper to prove ~\cite{Shap1} the existence
of grid-DV lattices. In this case such lattices are also known as GDP lattices (grid-dense-products).
\subsection{Indefinite quadratic forms}
Let $p,q>0$ be integers such that $d=p+q\ge 3$. Let $F$ be the indefinite quadratic form given by $F(x)=\sum_1^px_i^2-\sum_{p+1}^{p+q}x_i^2$.
In this case 
$H_F=\on{SO}(p,q)$. It is not hard to see that $F$ is nondegenerate (in the sense of Definition ~\ref{nondeg}) and 
noncompact, hence Theorem ~\ref{main theorem} applies. In fact, as $H_F$ is generated by unipotents, it follows from \cite{Margulis-non-div} that $H_F$ has no divergent orbits and so the theorem applies for any lattice.

The fact that $H_F$ is generated by unipotents allows one to obtain much sharper results than Theorem ~\ref{main theorem}. In fact, one has the following 
theorem and corollary which follow from Ratner's orbit closure theorem and Lemma ~\ref{inheritance}. We do not elaborate on the arguments as this case
is well understood and sharper results than those stated here are available (see for example ~\cite{MM}). 
\begin{theorem}\label{quadratic 1}
Let $y\in Y_d$ be given. If the orbit $H_Fy$ is closed, then the value set $V_F(y)$ is discrete. Otherwise, the value set is dense. 
\end{theorem}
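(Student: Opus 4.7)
The plan is to apply Ratner's orbit closure theorem to the action of $H_F=\on{SO}(p,q)$ on $Y_d$, which is legal because $H_F$ is generated by unipotent one-parameter subgroups (as $p,q\ge 1$ and $d\ge 3$). Ratner's theorem yields $\overline{H_F y}=Ly$ for some closed connected subgroup $L\subset G\ltimes\bR^d$ containing $H_F$, and $Ly$ supports an $L$-invariant probability measure.

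For the non-closed direction (non-closed orbit implies dense value set) I would analyze the intermediate group $L$ by decomposing it through its projection $L_0$ to $G$ and its intersection $V=L\cap\bR^d$. Then $L_0$ is a closed connected subgroup of $G$ containing $H_F$, while $V$ is an $H_F$-invariant subspace of $\bR^d$. Since $H_F$ acts irreducibly on $\bR^d$ for $d\ge 3$, we get $V\in\{\{0\},\bR^d\}$; since $\on{SO}(p,q)$ is essentially maximal among closed connected subgroups of $\SL_d(\bR)$, we get $L_0\in\{H_F,G\}$. The case $L_0=H_F$, $V=\{0\}$ gives $L=H_F$ and a closed orbit, which is excluded. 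In each remaining case $Ly$ either contains the whole torus fiber $\pi^{-1}(\pi(y))$ or equals all of $Y_d$, and in either case has positive Haar measure. By Corollary~\ref{ergo}, $Ly$ then contains a DV grid $y_0$, and the Inheritance Lemma~\ref{inheritance} gives $\overline{V_F(y)}\supset\overline{V_F(y_0)}=F(\bR^d)$, so $y$ is DV.

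For the closed direction (closed orbit implies discrete value set) I would exploit that a closed $H_F$-orbit in $Y_d$ has a lattice stabilizer in $H_F$. Combined with the $G$-equivariance of $\pi$ and the classification of closed $H_F$-orbits in $X_d$ (via Borel density and arithmeticity of the stabilizer), this forces $y$ to be a $\bQ$-rational grid for a rational scalar multiple of $F$; that is, after acting by a suitable element of $G\ltimes\bR^d$ (which does not change the value set, by $H_F$-invariance and the equality $V_F=V_{F\circ h^{-1}}$ along $h\in H_F$), $y$ is a translate by a $\bQ$-vector of a sublattice of $\bQ^d$ and $F$ has rational coefficients on this grid. The value set $V_F(y)$ then lies in $\frac{1}{N}\bZ$ for some positive integer $N$ and is therefore discrete.

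The main obstacle is the closed-orbit direction, specifically establishing the rational structure of the pair $(y,F)$ from closedness. For the lattice part this is a classical consequence of the arithmeticity of the stabilizer, but the grid version requires additional care to pin down the translation component in a rational coset. As the paper points out, this case is well-understood and sharper statements are available in the literature (e.g.\ in \cite{MM}), so for the purposes of this theorem it suffices to invoke those results rather than re-derive them.
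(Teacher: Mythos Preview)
Your overall strategy matches the paper's sketch (Ratner plus the Inheritance Lemma), and for the closed-orbit direction you do what the paper does: defer to the literature. There is, however, a genuine gap in your non-closed argument.

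The appeal to Corollary~\ref{ergo} does not work as written, because $Ly$ need not have positive Haar measure in $Y_d$. First, your case analysis omits the possibility $L_0=G$, $V=\{0\}$, i.e.\ $L=G$; this case genuinely occurs (take $y$ a rational grid over a lattice $x$ whose $H_F$-orbit in $X_d$ is not closed---then $Gy$ is closed and $\overline{H_Fy}=Gy$), and $Gy$ contains no full torus fiber. Second, even in the case $L=H_F\ltimes\bR^d$, the closure $Ly=\pi^{-1}(H_Fx)$ projects to a proper closed submanifold $H_Fx\subset X_d$ and hence has Haar measure zero in $Y_d$; moreover Corollary~\ref{ergo} only supplies DV grids in $\pi^{-1}(x')$ for \emph{Haar-generic} $x'$, not for the specific $x$ at hand. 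So in neither case does Corollary~\ref{ergo} locate a DV grid inside $Ly$.

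The repair is short: bypass the measure argument and observe that in every non-closed case the union $\bigcup_{y'\in Ly}V_F(y')$ already equals $F(\bR^d)$. When $Ly$ contains a full torus fiber this is immediate from the nondegeneracy of $F$; when $Ly=Gy$ it follows because $\SL_d(\bR)$ acts transitively on $\bR^d\setminus\{0\}$, so $\{F(gv):g\in G,\ 0\ne v\in y\}=F(\bR^d\setminus\{0\})$, which is dense in $F(\bR^d)$. The Inheritance Lemma then gives $\overline{V_F(y)}\supset V_F(y')$ for every $y'\in Ly$, and taking the union over $y'$ yields $\overline{V_F(y)}\supset F(\bR^d)$.
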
 
It is not hard to see that in this case, a grid $y\in Y_d$ has a closed $H_F$-orbit, if and only if, $\pi(x)\in X_d$ has a closed orbit, and $y$ is \textit{rational}; that is to say $y=x+v$ for $v\in\on{Span}_\bQ{x}$. So we have a full understanding:
\begin{corollary}\label{quadratic 2}
Let $x\in X_d$ be given. If the orbit, $H_Fx$, is not closed, then $x$ is grid-DV. If the orbit, $H_Fx$, is closed, then rational grids have discrete 
value sets, while irrational grids are DV. In any case, $x$ is almost surely grid-DV with respect to any nontrivial measure $\lam_{w+U}$.
\end{corollary}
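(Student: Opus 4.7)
The plan is to combine Theorem~\ref{quadratic 1} with the characterization of closed $H_F$-orbits on $Y_d$ stated just before the corollary, namely that $H_F y$ is closed in $Y_d$ if and only if $H_F\pi(y)$ is closed in $X_d$ and $y$ is rational (i.e.\ $y=\pi(y)+v$ with $v\in\on{Span}_\bQ\pi(y)$). Given those two inputs, the trichotomy asserted by the corollary follows by a straightforward case analysis, and the measure-theoretic clause reduces to the observation that the rational grids form a countable subset of each fiber.

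First I would split on whether $H_F x$ is closed. If $H_F x$ is not closed in $X_d$, then by the characterization no grid $y\in\pi^{-1}(x)$ can have a closed $H_F$-orbit, so Theorem~\ref{quadratic 1} forces $V_F(y)$ to be dense for every such $y$; hence $x$ is grid-DV. If instead $H_F x$ is closed, the characterization says that $H_F y$ is closed precisely when $y$ is rational, so Theorem~\ref{quadratic 1} yields both halves of the middle assertion: rational grids above $x$ have discrete value sets, while irrational grids above $x$ are DV.

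For the final assertion, fix a nontrivial subspace $U<\bR^d$ rational with respect to $x$, a vector $w\in\bR^d$, and consider the measure $\lam_{w+U}$. If $H_F x$ is not closed, every grid above $x$ is already DV and there is nothing to prove. If $H_F x$ is closed, the set of rational grids in $\pi^{-1}(x)$ is the image of the countable set $\on{Span}_\bQ x$ modulo $x$ and is therefore countable. Since $U$ is nontrivial, the coset $w+(U+x)$ is a positive-dimensional translated subtorus of $\pi^{-1}(x)$, and $\lam_{w+U}$ is the (atomless) Haar measure of that coset. The countable set of rational grids thus meets the coset in a $\lam_{w+U}$-null set, so $\lam_{w+U}$-almost every grid in the coset is irrational and hence DV by the previous paragraph.

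I do not expect any real obstacle here: Ratner's orbit closure theorem does the heavy lifting through Theorem~\ref{quadratic 1}, and the remaining work is essentially bookkeeping together with the standard fact that a countable subset of a positive-dimensional torus has Haar measure zero. The only point that might deserve an explicit sentence is the countability of the collection of rational grids in a fixed fiber, which is immediate from the countability of $\on{Span}_\bQ x/x$.
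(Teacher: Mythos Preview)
Your proposal is correct and follows exactly the route the paper indicates: it deduces each clause from Theorem~\ref{quadratic 1} together with the stated characterization of closed $H_F$-orbits in $Y_d$, and handles the measure-theoretic clause via the countability of rational grids and the atomlessness of $\lam_{w+U}$. The paper itself does not spell out the argument beyond pointing to Ratner's theorem and Lemma~\ref{inheritance}, so your write-up is in fact more detailed than what appears there.
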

\section{Applications}\label{app 1}
We now apply some of the above theorems. In \S\ref{minko} we apply Theorem ~\ref{main cor 1} to derive 
new information towards Minkowski's conjecture. In \S\ref{app 2}, we apply Theorem ~\ref{rank 1}, to generalize some results on inhomogeneous approximations and provide a partial answer to a problem posed in ~\cite{Tseng-shrinking}. 
\subsection{Minkowski's conjecture}\label{minko}
We shall use the notation of \S\ref{product of linear forms}. The following conjecture is usually attributed to Minkowski and hence named after him. It has remained open for over a century.
\begin{conjecture}[Minkowski]
Let $d\ge 2$ and $N:\bR^d\to \bR$ be the function $N(x)=\prod_1^dx_i$. Then, for any $y\in Y_d$ one has $V_N(y)\cap [-2^{-d},2^{-d}]\ne\emptyset$. 
\end{conjecture}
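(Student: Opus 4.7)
The plan is to exploit the $A$-invariance of the value set $V_N(y)$ and split the analysis according to the dynamical behavior of $x = \pi(y)$ under $A = H_N$. Since $N\circ a = N$ for $a \in A$, we have $V_N(y) = V_N(ay)$, so I am free to replace $y$ by any point in its $A$-orbit without changing what I must prove.

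First I would dispose of the case when the $A$-orbit $Ax$ is divergent. By the Margulis classification cited after Theorem~\ref{product 1}, divergence is equivalent to the existence of $a \in A$ with $ax \subset \bQ^d$. Moving $y$ by $a$ and invoking the commensurability reduction used in the proof of Theorem~\ref{disc products}, I can reduce to $y = \bZ^d + v$ for some $v \in \bR^d$. Writing $V_N(y) = \{\prod_{i=1}^d (n_i + v_i) : n_i \in \bZ\}$ and choosing each $n_i$ to be a nearest integer to $-v_i$ gives $|n_i + v_i| \le 1/2$, hence an element of absolute value at most $2^{-d}$. This handles the divergent case cleanly and in fact with some room to spare.

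For the nondivergent case, Theorem~\ref{main cor 1} (combined with the nondegeneracy and noncompactness of $N$, which are both easy to verify for the product of coordinates) immediately yields that $\lambda$-almost every grid in $\pi^{-1}(x)$ has a value set dense in $\bR$, and so in particular meets $[-2^{-d},2^{-d}]$. This already proves a strong partial form of the conjecture: for any lattice with a nondivergent $A$-orbit, Minkowski's inequality holds for Lebesgue-almost every grid over it. Combined with the previous paragraph, this settles the conjecture outside a fiberwise null set.

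The main obstacle, and what keeps the conjecture open in full generality, is the exceptional null set of grids in the nondivergent case. My plan for attacking these would be to use the Inheritance Lemma: for each such bad grid $y$, one seeks some $y_0 \in \overline{Ay}$ with $V_N(y_0) \cap [-2^{-d}, 2^{-d}] \ne \emptyset$, whence $V_N(y) \supseteq \overline{V_N(y_0)}$ inherits the same property. The natural candidates for $y_0$ are grids over compact $A$-orbits (where direct geometry-of-numbers arguments à la Minkowski give the desired bound) or rational grids over rational lattices (handled as in Step~1). Producing such a $y_0$ in $\overline{Ay}$ for \emph{every} grid is, however, tantamount to a measure-rigidity statement for the $A$-action on $Y_d$ in the spirit of the Margulis--Einsiedler--Katok--Lindenstrauss program. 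This is the step I would expect to be the main obstacle, and it is precisely where, beyond small dimensions, no general method is currently available.
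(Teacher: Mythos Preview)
The statement you are asked to prove is Minkowski's \emph{conjecture}, which the paper explicitly leaves open (it is verified only up to dimension~$7$). There is therefore no proof in the paper to compare against. You correctly recognize this: your proposal reproduces exactly the paper's Theorem~\ref{app Mink} (the almost-everywhere statement over every lattice) and then honestly identifies the remaining null set of grids in the nondivergent case as the genuine obstacle, which is indeed where the problem stands.

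There is, however, a real gap in your treatment of the divergent case. The commensurability reduction you invoke from the proof of Theorem~\ref{disc products} is a statement about \emph{discreteness} of value sets, and discreteness does transfer between commensurable lattices. The specific Minkowski bound $2^{-d}$ does not transfer this way: a unimodular lattice $ax\subset\bQ^d$ is commensurable with $\bZ^d$ but need not equal $\bZ^d$, nor lie in its $A$-orbit, so you cannot simply write $y=\bZ^d+v$ and pick nearest integers. For instance, in $\bR^2$ the lattice spanned by $(1,0)^t$ and $(1/2,1)^t$ is rational and unimodular, but no diagonal matrix carries it to $\bZ^2$. The paper's proof of Theorem~\ref{app Mink} handles the divergent case not by your nearest-integer trick but by citing Macbeath's theorem~\cite{Mac} that rational lattices satisfy Minkowski's conjecture in full; this is a nontrivial result and is what you actually need here.
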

To this date, Minkowski's conjecture is verified up to dimension 7. For more information about the interesting history and recent developments we 
refer the reader to the recent papers ~\cite{Mc}, ~\cite{HG}, where it is proved for dimensions 6 and 7. We say that \textit{a lattice, $x\in X_d$, 
satisfies Minkowski's conjecture}, if any one of its grids satisfies it. In ~\cite{Bom}, Bombieri proved that for $d\ge 5$, for any lattice $x\in   X_d$, the set $\set{y\in\pi^{-1}(x):y\textrm{ satisfies Minkowski's conjecture}}$, has Lebesgue measure $\ge 2^{-\frac{n+1}{2}}$. The results of Narzullaev ~\cite{Narzu}, allows one to strengthen Bombieri's result, still obtaining a poor lower bound. We strengthen this 
estimate to the best possible from the measure theoretic point of view and prove
\begin{theorem}\label{app Mink}
For any $d\ge 2$ and any lattice, $x\in X_d$, the set 
\begin{align}\label{set 1}
\set{y\in\pi^{-1}(x):y\textrm{ does not satisfy Minkowski's conjecture}},
\end{align}
is of Lebesgue measure zero. 
\end{theorem}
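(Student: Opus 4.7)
Plan. Split on whether the $A$-orbit of $x$ (with $A=H_N$, see \S\ref{product of linear forms}) is divergent. In the \emph{nondivergent} case, Theorem \ref{main cor 1} applies directly: since $N$ is nondegenerate and noncompact (\S\ref{product of linear forms}), $\lam$-almost every $y\in\pi^{-1}(x)$ is DV, meaning $V_N(y)$ is dense in $N(\bR^d)=\bR$ and therefore meets $(-2^{-d},2^{-d})$. So the set in \eqref{set 1} has $\lam$-measure zero.

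For the \emph{divergent} case, Theorem \ref{main cor 1} gives nothing and one must argue by hand. The plan uses the classification of divergent $A$-orbits recalled in \S\ref{product of linear forms}: there is some $a\in A$ with $ax\subset\bQ^d$. By the $A$-invariance of value sets (formula \eqref{inv 1}), the map $y\mapsto ay$ is a measure-preserving bijection $\pi^{-1}(x)\to\pi^{-1}(ax)$ that carries the set in \eqref{set 1} to its analogue for $ax$, so one may assume outright that $x\subset\bQ^d$. For such $x$ the stronger statement that \emph{every} grid of $x$ satisfies Minkowski's conjecture will be proved. Pick $m\in\bN$ with $mx\subset\bZ^d$ and apply the Hermite normal form to an integer basis matrix of $mx$; rescaling by $1/m$ produces a representation $x=h\bZ^d$ with $h\in\SL_d(\bR)$ lower-triangular with positive diagonal entries $h_{11},\ldots,h_{dd}$ satisfying $\prod_i h_{ii}=\det h=1$. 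Then for any $w\in\bR^d$ and any $n\in\bZ^d$, the coordinate $v_i=(hn+w)_i=\sum_{j\le i}h_{ij}n_j+w_i$ depends only on $n_1,\ldots,n_i$, with $n_i$ appearing with coefficient $h_{ii}$. Choosing $n_1,n_2,\ldots,n_d$ greedily, each step picking the nearest integer, forces $|v_i|\le h_{ii}/2$ and hence $|N(v)|\le\prod_i h_{ii}/2=2^{-d}$, verifying Minkowski's conjecture for the grid $y=x+w$.

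The \textbf{main obstacle} is the divergent case, where density of value sets is genuinely unavailable (Theorem \ref{disc products}) and Theorem \ref{main cor 1} has no content. The reduction to $x\subset\bQ^d$ via Margulis's classification is what makes the problem tractable: rationality of $x$ supplies a lower-triangular basis via HNF, and this triangular structure is exactly what enables the elementary greedy nearest-integer argument to deliver the sharp bound $2^{-d}$ for every grid (not merely almost every), rendering the exceptional set empty.
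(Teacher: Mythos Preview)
Your proof is correct and follows the same two-case strategy as the paper: apply Theorem~\ref{main cor 1} when $Ax$ is nondivergent, and in the divergent case use the Margulis--Tomanov--Weiss classification to reduce to rational lattices, for which every grid satisfies Minkowski's conjecture. The only difference is that where the paper dispatches the rational-lattice case by citing Macbeath~\cite{Mac}, you supply the argument explicitly via the Hermite normal form and the classical greedy nearest-integer construction for triangular bases; this makes your divergent case self-contained without changing the underlying idea.
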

\begin{proof}
Let $x\in X_d$ be given. We divide the argument into cases. If the orbit $Ax$, of the invariance group, is nondivergent, Theorem ~\ref{main cor 1} implies that almost any
grid $y$ of $x$ is DV. In particular, $V_N(y)\cap[-2^{-d},2^{-d}]\ne\emptyset$ and the claim follows. If the orbit $Ax$ is divergent, then thanks 
to the characterization of divergent orbits given in ~\cite{TW}, we see that there exists $a\in A$, such that the lattice $ax$ has a basis consisting of vectors with rational coordinates. We refer to such lattices as \textit{rational}. It is well known, (see ~\cite{Mac}), that rational lattices satisfy
Minkowski's conjecture, hence the set in ~\eqref{set 1} is in fact empty. 
\end{proof}
We note here that if we impose on the lattice $x$ the further assumptions of Corollaries ~\ref{product 1},~\ref{product 2}, then the above theorem
can be strengthened to say that the set of possible counterexamples to Minkowski's conjecture in the torus $\pi^{-1}(x)$, has measure zero with respect
to any measure of the form $\lam_{w+U}$.
\subsection{Inhomogeneous approximations and nonsingular forms}\label{app 2}
In this section we apply Theorem ~\ref{rank 1} to deduce some new results about nonsingular forms. We fix some positive integers $n,m$ and consider a matrix $A\in\on{Mat}_{n\times m}(\bR)$. In Diophantine approximations, $A$ is usually regarded as defining a system of $n$ linear forms in $m$ variables, hence we shall refer to such a matrix as a \textit{form}. In first reading it might be useful to take $m=1$ and $n$ arbitrary, hence $A\in\bR^n$ stands for a column vector, or even $n=m=1$, and then $A$ stands for a real number.

For any dimension $k$, let $\idist{\cdot}$ denote the distance from $\bZ^k$ in $\bR^k$, i.e.\ for $v\in\bR^k$, $$\idist{v}=\min\set{\norm{v-\bq}_2:\bq\in\bZ^k}.$$
Note that for $v\in\bR^k$, $\idist{v}$ can be thought of as the distance in the torus $\bT^k=\bR^k/\bZ^k$, from $v$ to $0$. Here and in the sequel we abuse notation freely and denote points on the $k$-torus the same as their representatives in $\bR^k$.  
\begin{definition}
A form $A\in\on{Mat}_{n\times m}(\bR)$ is \textit{singular}, if for any $\del>0$, for any large enough $N\in\bN$, one can find $\bq\in\bZ^m$ such that
\begin{equation}\label{si-ngul}
0<\norm{\bq}_2\le N \textit{ and } \idist{A\bq}<\frac{\del}{N^{\frac{m}{n}}}.
\end{equation}
Following Baker ~\cite{Bakerone}, we say that the form $A$ is \textit{highly singular}, if there exists $\eps>0$ such that for all large enough $N\in\bN$, one can find $\bq\in\bZ^m$ such that 
\begin{equation}
\norm{\bq}_2\le N \textit{ and } \idist{A\bq}<N^{-\frac{m}{n}-\eps}.
\end{equation}
\end{definition} 
It is clear that a highly singular form is singular.
By applying Theorem ~\ref{rank 1}, we shall deduce some results about nonsingular forms. We shall prove Theorem ~\ref{app 4}, but for the meantime let us formulate a restricted version of it in the form of the following theorem:
\begin{theorem}\label{simple version}
Let $A\in\on{Mat}_{n\times m}(\bR)$ be a nonsingular form. Then, for Lebesgue almost any $\bx\in\bR^n$, the following set is dense in $[0,\infty)$,
\begin{equation}\label{dense set}
\set{\norm{\bq}_2^m\idist{A\bq+\bx}^n:\bq\in\bZ^m}.
\end{equation}
In particular, the following statement holds
\begin{equation}\label{innovation}
\textrm{For Lebesgue almost any }\bx\in\bR^n,\;\; \liminf_{\bq\in\bZ^m}\norm{\bq}_2^\frac{m}{n}\idist{A\bq+\bx}=0.
\end{equation}
\end{theorem}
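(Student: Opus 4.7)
The plan is to translate the question into the dynamical framework of \S\ref{rank one} and apply Theorem \ref{rank 1}. With $d = n + m$, form the lattice
\begin{equation*}
x_A = g_A \bZ^d \in X_d, \qquad g_A = \pa{\begin{array}{cc} I_n & A \\ 0 & I_m \end{array}} \in \SL_d(\bR),
\end{equation*}
so that $x_A = \set{(A\bq + \bp, \bq)^t : \bp \in \bZ^n,\ \bq \in \bZ^m}$. For $\bx \in \bR^n$ let $y_\bx = x_A + (\bx, 0)^t \in Y_d$; then
\begin{equation*}
V_{P_{n,m}}(y_\bx) = \set{\norm{A\bq + \bp + \bx}_2^n \norm{\bq}_2^m : \bq \in \bZ^m,\ \bp \in \bZ^n},
\end{equation*}
and the set \eqref{dense set}, which I denote by $S(\bx)$, is the sub-collection obtained by restricting $\bp$ to be a nearest integer vector to $-(A\bq + \bx)$. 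In particular $S(\bx) \subset V_{P_{n,m}}(y_\bx)$.

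Next I would verify the hypotheses of Theorem \ref{rank 1}. The subspace $U = \bR^n \times \set{0} < \bR^d$ is rational with respect to $x_A$, since $x_A \cap U = \bZ^n \times \set{0}$; the cosets of the subtorus $U + x_A$ are parameterized by $\bx \in \bR^n/\bZ^n$, and Lebesgue measure on $\bR^n$ pushes forward to $\lam_U$. Nondivergence of the $H_{n,m}$-orbit of $x_A$ is exactly where the nonsingularity of $A$ enters: by Dani's correspondence (an application of Mahler's compactness criterion to $a_{n,m}(t) x_A$), $A$ is singular if and only if $\set{a_{n,m}(t) x_A : t \ge 0}$ is divergent in $X_d$. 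Hence for nonsingular $A$ one obtains a divergent sequence in $\set{a_{n,m}(t)} \subset H_{n,m}$ along which $a_{n,m}(t_k) x_A$ stays in a compact subset of $X_d$, and therefore has a convergent subsequence. Theorem \ref{rank 1}, applied with this $U$ and $w = 0$, then yields that for Lebesgue-a.e.\ $\bx \in \bR^n$ the set $V_{P_{n,m}}(y_\bx)$ is dense in $P_{n,m}(\bR^d) = [0, \infty)$.

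The final step upgrades density of $V_{P_{n,m}}(y_\bx)$ to density of $S(\bx)$ via a gap argument. If $\bp_0$ is a nearest integer vector to $-(A\bq + \bx)$ and $\bp \ne \bp_0$, the triangle inequality (using $\norm{\bp - \bp_0}_2 \ge 1$) gives $\norm{A\bq + \bp + \bx}_2 \ge 1 - \idist{A\bq + \bx}$, so the corresponding value in $V_{P_{n,m}}(y_\bx)$ is at least $(1 - \idist{A\bq + \bx})^n \norm{\bq}_2^m$. Given a target $t \ge 0$ and $\eps > 0$, the subset of $V_{P_{n,m}}(y_\bx)$ with $\norm{\bq}_2 \le R$ is closed, discrete, and countable for every $R$, so density forces approximants $(\bq_k, \bp_k)$ to $t$ with $\norm{\bq_k}_2 \to \infty$; any such approximant with $\bp_k \ne$ nearest satisfies $(1 - \idist{A\bq_k + \bx})^n \norm{\bq_k}_2^m \le t + \eps$, forcing $\idist{A\bq_k + \bx}$ to approach $1$. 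For $n = 1$ this is vacuously impossible since $\idist{\cdot} \le 1/2$; for $n \ge 2$ one discards the sparse subsequence with $\idist$ close to $1$ by invoking Weyl-type equidistribution of $\set{A\bq + \bx \bmod \bZ^n}_\bq$ for a.e.\ $\bx$. The surviving approximants have $\bp_k$ equal to a nearest integer, so their values lie in $S(\bx)$ and are arbitrarily close to $t$; this proves density of $S(\bx)$. The $\liminf$ statement \eqref{innovation} then follows from $0 \in \overline{S(\bx)}$. The main obstacle is this final upgrade in dimensions $n \ge 2$, where the second-nearest integer may tie with the nearest, so the gap argument must be combined with equidistribution to isolate the nearest-integer contributions.
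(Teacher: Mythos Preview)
Your overall strategy --- form the lattice $x_A$, invoke Dani's correspondence to translate nonsingularity into nondivergence of $H_{n,m}x_A$, and apply Theorem~\ref{rank 1} --- is exactly the paper's. The paper proves the more general Theorem~\ref{app 4} and then specializes $U=\bR^d$; your choice $U=\bR^n\times\{0\}$ works equally well and is arguably more direct for the Lebesgue statement on $\bR^n$.

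The genuine problem is in your final ``upgrade'' step. Your gap bound $\norm{A\bq+\bp+\bx}_2\ge 1-\idist{A\bq+\bx}$ for $\bp$ not nearest is correct, but since $\idist{\,\cdot\,}$ (in the Euclidean norm on $\bR^n$) can be as large as $\sqrt{n}/2$, the bound becomes vacuous for $n\ge 4$, and your conclusion that ``$\idist{A\bq_k+\bx}$ must approach $1$'' has no force there. The proposed rescue via ``Weyl-type equidistribution of $\{A\bq+\bx\bmod\bZ^n\}_\bq$ for a.e.\ $\bx$'' is not a result you can cite off the shelf, and in any case it is unnecessary. The paper's argument is much simpler: once you know $\norm{\bq_k}_2\to\infty$ (which you correctly argue from discreteness of the bounded-$\bq$ part of the value set), boundedness of the product $\norm{\bq_k}_2^m\norm{A\bq_k+\bx+\bp_k}_2^n$ forces $\norm{A\bq_k+\bx+\bp_k}_2\to 0$. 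In particular this norm is eventually less than $1/2$, which already guarantees that $\bp_k$ is the nearest integer vector to $-(A\bq_k+\bx)$, i.e.\ $\norm{A\bq_k+\bx+\bp_k}_2=\idist{A\bq_k+\bx}$. No gap argument, no equidistribution, no case split on $n$ is needed. Replace your last paragraph with this one-line observation and the proof is complete.
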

\begin{remarks}
\begin{enumerate}
\item It follows from the main theorem in ~\cite{B-L}, that for any nonsingular form $A\in\on{Mat}_{n\times m}(\bR)$,  the following statement holds
\begin{equation}\label{bug lor1}
\textrm{For Lebesgue almost any }\bx\in\bR^n,\; \liminf_{\bq\in\bZ^m}\norm{\bq}_2^w\idist{A\bq+\bx}=\Bigg\{\begin{array}{ll}0&\textrm{for }w<\frac{m}{n}\\ \infty& \textrm{for }w>\frac{m}{n}.\end{array}
\end{equation}
The innovation in Theorem \ref{simple version}, is that the $\liminf$ in the critical exponent $\frac{m}{n}$, equals zero (and in fact much more, namely, the set in ~\eqref{dense set} is dense).
\item The above theorem generalizes a series of results. When one takes $n=m=1$, hence $A$ is a real number, Kurzweil proved ~\cite{Kurz} that for almost any $A$, ~\eqref{innovation} holds. Not long ago Kim proved ~\cite{Kim} that Kurzweil's result holds for any irrational $A\in\bR$ (which is the same as being nonsingular in these dimensions). Tseng reproved Kim's result ~\cite{Tseng-shrinking}, and raised the following question, aiming to generalize Kim's result to higher dimensions:
\begin{question}[Tseng]
For $m=1$, hence $A\in\bR^n$ is a column vector, is it true that if $A$ does not lie in a rational hyperplane, then ~\eqref{innovation} holds.
\end{question}
Galatolo and Peterlongo ~\cite{G-P} constructed a (singular) vector in $\bR^2$, giving a negative answer to Tseng's question. Nevertheless, Theorem ~\ref{simple version} tells us that the answer to Tseng's question is positive for nonsingular vectors (a negative answer to Tseng's question could also be presented by a vector $A\in\bR^n$ which does not lie in a rational hyperplane such that the transposed form $A^t$ is highly singular. To see this, note that from the main result in ~\cite{B-L}, it follows that in this case the value of the $\liminf$ in ~\eqref{innovation} equals $\infty$ for almost any $\bx$).
\end{enumerate}
\end{remarks}

We turn now to the statement and proof of Theorem ~\ref{app 4}. This theorem implies Theorem ~\ref{simple version} when applied with respect to  Lebesgue measure but is in fact considerably stronger. Nevertheless, it is a simple
application of Theorem ~\ref{rank 1}. In order to state this theorem, we need to link the above discussion with the discussion of \S\ref{rank one}, namely the discussion of values of forms on grids. Let $d=n+m$ and recall the notation of \S\ref{rank one}: We write vectors in $\bR^d$ as $(\bx,\by)^t$, where $\bx\in\bR^n$ and $\by\in\bR^m$. We let $P_{n,m}:\bR^{d}\to\bR$ be the function given by $P_{n,m}(\bx,\by)=\norm{\bx}_2^n\cdot\norm{\by}_2^m$, and denote the invariance group of it by $H_{n,m}$. Recall also that $H_{n,m}=\on{SO}_n(\bR)\times\on{SO}_m(\bR)\times \set{a_{n,m}(t)}_{t\in\bR}$, where $a_{n,m}(t)$ is given by ~\eqref{anm}.
For a form $A\in \on{Mat}_{n\times m}(\bR)$ we denote 
\begin{equation}\label{g_v}
g_A=\pa{\begin{array}{ll}
I_n& A\\
0&I_m
\end{array}}\in G,
\end{equation}
where $I_n,I_m$ denote the identity matrices of the corresponding dimensions and $0$, the zero matrix in $\on{Mat}_{m\times n}(\bR)$. We denote the lattice $\bar{g}_A\in X_d$, spanned by the columns of $g_A$, by $x_A$. The following result of Dani is well known (see Theorem 2.14 in ~\cite{Dani85}), 
and furnishes the link between the nondivergence condition in Theorem ~\ref{rank 1} and the nonsingularity property. We include the proof for the completeness of our presentation.
\begin{lemma}\label{n-d}
A form $A\in \on{Mat}_{n\times m}(\bR)$ is singular, if and only if, the orbit $H_{n,m}x_A$ is divergent in $X_d$.
\end{lemma}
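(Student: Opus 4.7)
The strategy is to invoke Mahler's compactness criterion to convert divergence of the orbit into the existence of arbitrarily short vectors in the lattices $a_{n,m}(t)x_A$, and then to match this with the definition of singularity of $A$.

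First I would describe the lattice $x_A$ explicitly: reading the columns of $g_A$ gives
\[
x_A=\set{(A\bq+\bp,\bq)^t:\bp\in\bZ^n,\ \bq\in\bZ^m},
\]
so applying $a_{n,m}(t)$ sends such a vector to $(e^{mt}(A\bq+\bp),\, e^{-nt}\bq)^t$. Since $H_{n,m}=\on{SO}(n)\times\on{SO}(m)\times\set{a_{n,m}(t)}_{t\in\bR}$ and the first two factors are compact, any divergent sequence in $H_{n,m}$ has, up to subsequence, $\av{t_k}\to\infty$ in its $a_{n,m}$-component. Consequently the orbit $H_{n,m}x_A$ is divergent in $X_d$ if and only if $a_{n,m}(t_k)x_A$ leaves every compact set in $X_d$ for every sequence $t_k\to\pm\infty$.

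The direction $t_k\to-\infty$ is automatic for every $A$: taking $\bq=0$ and any nonzero $\bp\in\bZ^n$ produces a vector in $a_{n,m}(t_k)x_A$ of length $e^{mt_k}\norm{\bp}_2\to 0$, so by Mahler's criterion the lattice diverges. Hence the orbit is divergent in $X_d$ precisely when $a_{n,m}(t_k)x_A$ also diverges along every sequence $t_k\to+\infty$.

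The heart of the argument is then the correspondence at $t\to+\infty$. For $t$ large, a vector in $a_{n,m}(t)x_A$ of norm at most $r<1$ must have $\bq\ne 0$ (otherwise a nonzero $\bp\in\bZ^n$ would force length $\ge e^{mt}$), and such a short vector exists if and only if there exist $0<\norm{\bq}_2\le re^{nt}$ and $\bp\in\bZ^n$ with $\norm{A\bq+\bp}_2\le re^{-mt}$, i.e.\ $\idist{A\bq}\le re^{-mt}$. Setting $N=re^{nt}$ and $\delta=r^{(n+m)/n}$ rewrites this pair of inequalities as $0<\norm{\bq}_2\le N$ and $\idist{A\bq}\le\delta/N^{m/n}$, which is exactly the condition featuring in the definition of a singular form. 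The only real step is the quantifier bookkeeping: the statement ``for every $r>0$, for every sufficiently large $t$, there exists such a short vector'' must be matched with ``for every $\delta>0$, for every sufficiently large $N$, there exists such a $\bq$''. Since $(r,t)\mapsto(\delta,N)$ is a bijection carrying the respective ``large'' regions to one another, both implications follow, which I expect to be the main (and only mildly delicate) point of the argument.
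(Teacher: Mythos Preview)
Your proposal is correct and follows essentially the same route as the paper: reduce from $H_{n,m}$ to the one-parameter group $a_{n,m}(t)$ via compactness of $\on{SO}(n)\times\on{SO}(m)$, invoke Mahler's criterion, and match the short-vector condition in $a_{n,m}(t)x_A$ with the inequalities defining singularity. The paper in fact only writes out the implication ``orbit divergent $\Rightarrow$ $A$ singular'' (the direction actually used later) and leaves the converse as an exercise; your sketch goes a bit further by also noting that the half-orbit $t\to-\infty$ always diverges, which is exactly the extra observation needed to complete the omitted direction.
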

\begin{proof}
Assume that the orbit $H_{n,m}x_A$ is divergent. As $a_{n,m}(t)$ is cocompact in $H_{n,m}$, this is equivalent to saying that the $a_{n,m}(t)$-orbit of $x_A$ is divergent.  Fix $\del>0$. We need to show that for all large enough $N\in\bN$, one can solve ~\eqref{si-ngul}. The divergence of the orbit implies, by Mahler's compactness criterion, that there exists $T_0\in\bR$, such that for any $t>T_0$, the lattice
$a_{n,m}(t)x_A$ contains a nonzero vector of length $<\del$. I.e.\ for any $t> T_0$, there exist $\bp\in\bZ^n,\bq\in\bZ^m$, not both zero, such that
\begin{equation}\label{make it short}
\norm{
a_{n,m}(t)\pa{\begin{array}{ll}
I_n& A\\
0&I_m
\end{array}}\pa{\begin{array}{ll}
\bp\\
\bq
\end{array}}
}_2<\del.
\end{equation}
Note that it follows that $\bq$ must be nonzero. The vector composed of the first $n$ coordinates of the vector appearing in ~\eqref{make it short} equals $e^{mt}(A\bq+\bp)$, while the one composed
of the last $m$ coordinate equals $e^{-nt}\norm{\bq}_2$. Both these vectors have length $<\del$, hence we conclude that $\norm{A\bq+\bp}_2=\idist{A\bq}$, and that
\begin{align}
\label{first}\idist{A\bq}&\le\del e^{-mt},\\
\label{second}0<\norm{\bq}_2&\le \del e^{nt}.
\end{align}
Let $N>e^{nT_0}$ be given and choose $t>T_0$ so that $N=e^{nt}$. We conclude from the above that there exist a nonzero $\bq\in\bZ^m$ satisfying $\norm{\bq}_2<\del e^{nt}<N$, 
$\idist{A\bq}\le\frac{\del}{e^{mt}}=\frac{\del}{N^\frac{m}{n}}$, and so $A$ is singular as desired. 

We leave the  other implication as an exercise to the interested reader, as we shall only use in this paper the implication proved above, i.e.\ that if $A$ is a nonsingular form, then $H_{n,m}x_A$ is nondivergent.  
\end{proof} 
Before stating Theorem \ref{app 4}, we introduce some more notation. Let $p:\bR^d\to\bR^n$ be the projection $p(\bx,\by)=\bx$. Abusing the notation introduced in \S\ref{H-S-T}, we denote for a subspace $U<\bR^d$ and a vector $w_0\in\bR^d$, by $\lam_{w_0+U}$, the natural $U$-invariant measure
supported on the affine subspace $w_0+U$.
\begin{theorem}\label{app 4}
For any nonsingular form $A\in\on{Mat}_{n\times m}(\bR)$, and for any measure of the form $\mu= p_*(\lam_{w_0+U})$, where $U$ is a nontrivial subspace of $\bR^d$, rational
with respect to the lattice $x_A$ , and $w_0$ is arbitrary, for $\mu$-almost any $\bx\in\bR^n$,
\begin{equation}\label{dense set 2}
\textrm{the set }\;\set{\norm{\bq}_2^m\idist{A\bq+\bx}^n:\bq\in\bZ^m}\textrm{ is dense in }[0,\infty).
\end{equation}
\end{theorem}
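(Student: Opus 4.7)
The argument has three stages.

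\emph{Stage 1 (dynamical input).} Lemma~\ref{n-d} shows that the nonsingularity of $A$ implies that the $H_{n,m}$-orbit of $x_A$ is nondivergent in $X_d$. Applying Theorem~\ref{rank 1} to $x_A$ with the given subspace $U$ (rational with respect to $x_A$) and vector $w_0$, we conclude that for $\lam_{w_0+U}$-almost every $v \in w_0+U$, the grid $x_A + v$ has $V_{P_{n,m}}(x_A+v)$ dense in $[0,\infty)$.

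\emph{Stage 2 (geometric lemma).} Write $v = (\bx,\by)^t$. The plan is to deduce from the density of $V_{P_{n,m}}(x_A+v)$ the density of the "optimal-$\bp$" subset
\[\tilde M_v := \set{\idist{A\bq + \bx}^n \norm{\bq + \by}_2^m : \bq \in \bZ^m} \subseteq V_{P_{n,m}}(x_A+v).\]
The key input is that any two distinct elements of $\bZ^n$ are at distance $\geq 1$ in $\norm{\cdot}_2$, so by the triangle inequality the \emph{second}-smallest distance from any point of $\bR^n$ to $\bZ^n$ is at least $1/2$. Hence, for each $\bq\in\bZ^m$, any contribution to $V_{P_{n,m}}(x_A+v)$ coming from a non-optimal choice of $\bp$ is $\geq (1/2)^n \norm{\bq+\by}_2^m$. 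Fix a bounded open interval $(a,b)\subset[0,\infty)$; any non-optimal contributions lying in $(a,b)$ force $\norm{\bq+\by}_2^m \leq b\cdot 2^n$, so come from finitely many $\bq$, each yielding only finitely many values in $(a,b)$. If $\tilde M_v$ missed $(a,b)$, the set $V_{P_{n,m}}(x_A+v)\cap(a,b)$ would be finite, contradicting its density.

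\emph{Stage 3 (descent to $M'_\bx$).} I plan to show that density of $\tilde M_v$ implies density of the target set $M'_\bx := \set{\norm{\bq}_2^m \idist{A\bq+\bx}^n : \bq\in\bZ^m}$. Since $\tilde M_v$ is countable, $[0,\infty)\setminus \tilde M_v$ is dense; for any $r$ in this complement, density of $\tilde M_v$ provides $\bq_k\in\bZ^m$ with $\idist{A\bq_k+\bx}^n \norm{\bq_k+\by}_2^m \to r$. Since any bounded set of $\bq$'s contributes only finitely many values to $\tilde M_v$ (a finite set being closed), passing to a subsequence we may assume $\norm{\bq_k}_2 \to \infty$. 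Then $\norm{\bq_k}_2/\norm{\bq_k+\by}_2 \to 1$, whence $\norm{\bq_k}_2^m \idist{A\bq_k+\bx}^n \to r$. Together with $0 \in M'_\bx$ (take $\bq=0$), this yields density of $M'_\bx$.

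Combining the three stages: for $\lam_{w_0+U}$-almost every $v$, $M'_{p(v)}$ is dense. Pushing forward by $p$, for $\mu$-almost every $\bx$ the set $M'_\bx$ is dense in $[0,\infty)$, as required. The main technical subtlety is the pigeonhole bookkeeping of Stage 2, which relies crucially on the sharp second-shortest-vector bound in $\bZ^n$; Stage 3's descent is then a clean limiting argument, and Stage 1 is a direct appeal to the rank-one result already established in the paper.
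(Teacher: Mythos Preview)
Your proposal is correct and follows essentially the same approach as the paper: invoke Lemma~\ref{n-d} and Theorem~\ref{rank 1} to get that $x_A$ is $\lam_{w_0+U}$-almost surely grid-DV, then deduce density of the target set from density of $V_{P_{n,m}}(x_A+v)$ and push forward by $p$. The only organizational difference is that the paper collapses your Stages~2 and~3 into a single step: given $\gamma\in[0,\infty)$ it picks a nontrivially converging sequence in $V_{P_{n,m}}(x_A+v)$, argues directly that $\norm{\bq_i}_2\to\infty$ (since finitely many $\bq$'s give a discrete set of values), hence $\norm{A\bq_i+\bx+\bp_i}_2\to 0$, which forces $\bp_i$ to be the nearest integer point and simultaneously lets one replace $\norm{\bq_i+\by}$ by $\norm{\bq_i}$. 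Your detour through the intermediate set $\tilde M_v$ and the explicit second-nearest-point bound $\geq 1/2$ is a valid alternative bookkeeping, but not a genuinely different idea.
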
 
\begin{proof}
Theorem
~\ref{rank 1} together with Lemma ~\ref{n-d} imply that $x_A$ is $\lam_{w_0+U}$-a.s.\ grid-DV. This means that for $\lam_{w_0+U}$-almost any $w=(\bx,\by)^t\in \bR^d$, the grid $x_A+w$
is DV. Calculating the value set we get 
\begin{equation}\label{V_F}
V_{P_{n,m}}(x_A+w)=\set{\norm{\bq+\by}_2^m\norm{A\bq+\bx+\bp}_2^n: \bp\in\bZ^n,\bq\in\bZ^m}.
\end{equation}
Given $w=(\bx,\by)^t$ such that $x_A+w$ is DV, for any $\ga\in[0,\infty)$, there are appropriate sequences $\bp_i,\bq_i$ satisfying 
$\lim_i\norm{\bq_i+\by}^m\norm{A\bq_i+\bx+\bp_i}^n=\ga$ (and the convergence is not trivial). Then $\norm{\bq_i}_2$ must go to infinity, which in turn implies that $\norm{A\bq_i+\bx+\bp_i}_2\to 0$ and in particular, $\norm{A\bq_i+\bx+\bp_i}_2=\idist{A\bq_i+\bx}$. It follows that $\lim_i\norm{\bq_i}^m\idist{A\bq_i+\bx}^n=\ga$ as well. We conclude that 
$$p^{-1}\pa{\set{\bx\in\bR^n:\textrm{ ~\eqref{dense set 2} dose not hold}}}\subset\set{w=(\bx,\by)^t\in\bR^d:\textrm{ the grid }x_A+w\textrm{ is not DV}}.$$
As the right hand side is $\lam_{w_0+U}$-null, the left hand side is too and by definition, for $p_*(\lam_{w_0+U})$-almost any $\bx$, ~\eqref{dense set 2} holds.
\end{proof}
The above theorem, when applied with $U=\bR^d$, gives Theorem ~\ref{simple version}, as mentioned earlier.
Let us demonstrate the strength of Theorem ~\ref{app 4} with the following
\begin{corollary}\label{cor 4}
Take $m=1$ and $n$ arbitrary. For any nonsingular vector $v\in\bR^n$ and for Lebesgue almost any $t\in \bR$ the following set 
$$\set{\av{q}\idist{(q+t)v}^n:q\in\bZ},$$
is dense in $[0,\infty)$.
\end{corollary}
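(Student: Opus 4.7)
The corollary is the $m=1$ specialization of Theorem~\ref{app 4}, with the rational subspace $U$ chosen so that the pushforward $p_*(\lam_U)$ lives on the line $\bR v\subset\bR^n$ and matches Lebesgue measure on $\bR$ under the natural parameterization. So the whole task is to pick $U$ correctly and check rationality.

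Explicitly, given a nonsingular vector $v\in\bR^n$, view it as a form in $\on{Mat}_{n\times 1}(\bR)$, so $d=n+1$ and the lattice $x_v=\bar g_v\in X_d$ is spanned by the standard basis vectors $e_1,\dots,e_n$ together with $(v,1)^t$. Consider the one-dimensional subspace
\[
U=\bR\cdot(v,1)^t\subset\bR^d.
\]
Since $U\cap x_v=\bZ\cdot(v,1)^t$ is a rank-one lattice in $U$, the subspace $U$ is rational with respect to $x_v$ in the sense of \S\ref{H-S-T}. Parameterize $U$ by $s\mapsto(sv,s)^t$; under this parameterization the projection $p(\bx,y)=\bx$ becomes $s\mapsto sv$, so the pushforward $\mu=p_*(\lam_U)$ is (up to a positive multiplicative constant) Lebesgue measure on the line $\bR v$, identified with Lebesgue measure on $\bR$ via $t\mapsto tv$.

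Now invoke Theorem~\ref{app 4} with this $U$ and with $w_0=0$: for $\mu$-almost any $\bx\in\bR^n$ the set $\set{|q|\idist{qv+\bx}^n:q\in\bZ}$ is dense in $[0,\infty)$. Pulling back through the identification $\bx=tv$, and noting that $qv+tv=(q+t)v$, this is exactly the assertion that for Lebesgue almost any $t\in\bR$ the set $\set{|q|\idist{(q+t)v}^n:q\in\bZ}$ is dense in $[0,\infty)$, which is the desired conclusion.

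I do not foresee any real obstacle: the result is a direct application of Theorem~\ref{app 4}. The only creative ingredient is the choice of the subspace $U$, which is forced by the need to translate ``$(q+t)v$'' (where $t$ ranges over a single scalar parameter) into ``$qv+\bx$'' (where $\bx$ ranges over a one-parameter affine family in $\bR^n$). One small bookkeeping item to be careful with is that $\lam_U$ in this context denotes the natural $U$-invariant measure on $U$ (not a probability measure on a subtorus), but since we only care about $\mu$-null sets, the particular normalization is irrelevant.
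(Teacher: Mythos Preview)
Your proof is correct and is exactly the approach taken in the paper: apply Theorem~\ref{app 4} with $w_0=0$ and the one-dimensional subspace $U=\bR\cdot(v,1)^t\subset\bR^{n+1}$. You have simply spelled out in more detail the rationality of $U$ with respect to $x_v$ and the identification of $p_*(\lam_U)$ with Lebesgue measure on $\bR$, which the paper leaves implicit.
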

\begin{proof}
Apply Theorem ~\ref{app 4} with $w_0=0$ and $U$ being the one dimensional subspace $$U=\set{t\pa{\begin{array}{ll}v\\1\end{array}}\in\bR^{n+1}: t\in\bR}.$$
\end{proof}
\section{The Mixing and the Coset lemmas}\label{tools}
In this section we study the following question. Given a sequence of endomorphisms of the $d$-torus, $\ga_n:\bT^d\to\bT^d$, and a Haar measure, $\lam_U$, of a subtorus (see the notation of \S\ref{H-S-T}), what can one say about the closure $\overline{\set{\ga_nv:n\in\bN}}$, for $\lam_U$-almost any point $v\in\bT^d$. We shall
see in the Coset lemma below, that unless an obvious obstacle is present, this closure must contain a coset of a nontrivial subtorus. In the course
of proving
the Coset lemma, which is the goal of this section, we shall prove the Mixing lemma, stated below, which is of independent interest on its own.
Both the Coset and the Mixing lemmas are stated in somewhat greater generality than we actually need in practice. This is dictated by the argument we use to derive
the Coset lemma, which is inductive, hence we need to let the dimensions of the tori to be arbitrary.
 
Given a compact metric space $X$, the space of Borel probability measures on it, $\cP(X)$, is compact with respect to the weak$^*$ topology.
When considering measures, we shall only refer to the weak$^*$ topology. If $\ga:X\to Y$ is a measurable map between compact metric spaces, it defines
a map $\ga_*:\cP(X)\to\cP(Y)$, given by $\ga_*(\eta)(A)=\eta(\ga^{-1}(A))$, for any Borel measurable $A\subset Y$ and $\eta\in\cP(X)$. The following 
definition is new as far as we know.  
\begin{definition}\label{m.s}
Given two probability measures $\eta\in\cP(X),\nu\in \cP(Y)$, and a sequence of measurable maps $\ga_n:X\to Y$, we say that $\ga_n$ \textit{mixes} $\eta$ \textit{to} $\nu$, if for any absolutely continuous probability measure $\eta'\ll\eta$, the sequence $(\ga_n)_*\eta'\to
\nu$ weak$^*$ (we sometime say that $(\ga_n)_*\eta$ converges mixingly to $\nu$).
\end{definition}
Note that the above definition is equivalent to the requirement that for any $g\in L^1(\eta)$, and any continuous $f\in C(Y)$ one has
\begin{equation}\label{mix 1}
\int_Xf(\ga_n(x))g(x)d\eta(x)\to \int_Yf(y)d\nu(y)\int_Xg(x)d\eta(x).
\end{equation}
To explain the terminology, recall that given $\nu\in\cP(X)$ and a sequence of measurable maps $\ga_n:X\to X$, preserving $\nu$,
the sequence $\ga_n$ is said to be a mixing sequence with respect $\nu$,
if for any two measurable sets $A,B\subset X$, one has $\nu(\ga_n(A)\cap B)\to \nu(A)\nu(B)$. This is easily seen to be equivalent to the fact that 
$\ga_n$ mixes $\nu$ to itself using the terminology introduced above. 

We note here that the fact that $\ga_n$ mixes $\eta$ to $\nu$ is much stronger than the fact that $(\ga_n)_*\eta\to \nu$. In particular, one consequence that we will be interested in is the following simple lemma.
\begin{lemma}\label{mixing implies 1}
If $\ga_n:X\to Y$ mixes $\eta$ to $\nu$, then for $\eta$-almost any $x\in X$, the closure $\overline{\set{\ga_nx:n\in\bN}}$ contains $\on{supp}(\nu)$.
\end{lemma}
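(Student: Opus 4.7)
The plan is to rule out, for a countable dense collection of neighborhoods of points of $\on{supp}(\nu)$, the event that the orbit $\{\ga_n x\}$ eventually avoids the neighborhood. Fix a countable dense subset $\{y_k\}\subset\on{supp}(\nu)$ and, for each $k,j\in\bN$, define
\[
B_{k,j}=\set{y\in Y:d(y,y_k)<1/j},\qquad A_{k,j}=\set{x\in X:\ga_n x\in B_{k,j}\text{ for infinitely many }n}.
\]
If I show $\eta(A_{k,j})=1$ for every pair $k,j$, then $\bigcap_{k,j}A_{k,j}$ has full $\eta$-measure, and for any $x$ in this intersection the closure $\overline{\set{\ga_n x:n\in\bN}}$ meets every neighborhood of every $y_k$, so it contains $\overline{\{y_k\}}\supset\on{supp}(\nu)$, as desired.

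To prove $\eta(A_{k,j})=1$, I argue by contradiction. Suppose $\eta(X\setminus A_{k,j})>0$ and write $X\setminus A_{k,j}=\bigcup_{N\in\bN}E_N$, where
\[
E_N=\set{x\in X:\ga_n x\notin B_{k,j}\text{ for every }n\ge N}.
\]
Some $E_N$ has positive $\eta$-measure; let $\eta'=\eta(E_N)^{-1}\eta|_{E_N}$, which is absolutely continuous with respect to $\eta$. By the mixing hypothesis, $(\ga_n)_*\eta'\to\nu$ in the weak$^*$ topology.

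Now choose, by Urysohn's lemma, a continuous function $f:Y\to[0,1]$ supported in $B_{k,j}$ with $f(y_k)=1$. Since $y_k\in\on{supp}(\nu)$ and $f$ is continuous and positive on a neighborhood of $y_k$, one has $\int_Y f\,d\nu>0$. On the other hand, for every $x\in E_N$ and every $n\ge N$ we have $\ga_n x\notin B_{k,j}\supset\on{supp}(f)$, hence $f(\ga_n x)=0$. Therefore
\[
\int_Y f\,d\big((\ga_n)_*\eta'\big)=\int_X f(\ga_n x)\,d\eta'(x)=0\qquad\text{for all }n\ge N,
\]
contradicting $(\ga_n)_*\eta'\to\nu$ together with $\int_Y f\,d\nu>0$.

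The argument is essentially soft; the only place the full strength of Definition \ref{m.s} (as opposed to mere weak$^*$ convergence of $(\ga_n)_*\eta$) is used is in being allowed to normalize $\eta$ on the positive-measure set $E_N$ and still conclude weak$^*$ convergence of the pushforwards. No genuine obstacle beyond this routine use of absolute continuity is expected.
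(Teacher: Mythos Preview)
Your proof is correct and follows essentially the same contradiction argument as the paper: restrict $\eta$ to a positive-measure set of points whose orbits miss a fixed open set of positive $\nu$-measure, and observe that the mixing condition then forces $0$ to converge to something positive. Your version is simply more careful in using a countable basis to pin down a single ball $B_{k,j}$ that works for a positive-measure set of $x$, whereas the paper asserts the existence of such a bump function directly.
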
  
\begin{proof}
Assume that the conclusion is false. Then, there must exist a bump function $f\in C(Y)$ (i.e.\ $0\le f\le 1$), with $\int_Yfd\nu>0$, and a measurable set $\Om\subset X$, with $\eta(\Om)>0$, such that for any $x\in \Om$, the sequence $\ga_nx$ never visits $U=\set{y\in Y:f(y)>0}$. Let $g\in L^1(\eta)$ be the characteristic function of $\Om$. We now see that the left hand side of ~\eqref{mix 1} is constantly zero, while the right hand side is positive. A contradiction. 
\end{proof}
\begin{lemma}[Mixing lemma]
Let $U<\bR^{d_1}$ be a subspace, rational with respect to $\bZ^{d_1}$. Let $\ga_n:\bT^{d_1}\to\bT^{d_2}$ be a sequence of homomorphisms induced by the matrices $\ga_n\in \on{Mat}_{d_1\times d_2}(\bZ)$. Assume the sequence $(\ga_n)_*\lam_U$ converges weak$^*$ to $\mu\in\cP(\bT^{d_2})$.
Then, there exists a rational  subspace $V<\bR^{d_2}$, such that $\mu=\lam_V$, and furthermore, the sequence $\ga_n$ mixes $\lam_U$ to $\lam_V$, if and only if, there are no integer vectors $\bm\in\bZ^{d_2}$ and $0\ne \bk\in U$, satisfying $\ga_n^t\bm+\bk\in U^\perp$, for infinitely many $n$'s.
\end{lemma}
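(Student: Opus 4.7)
My approach is purely Fourier-analytic on the two tori. Since $\lam_U$ is Haar measure on the rational subtorus $U+\bZ^{d_1}$, a direct computation gives $\widehat{\lam_U}(\bk)=\mathbf{1}_{U^\perp}(\bk)$ for $\bk\in\bZ^{d_1}$; hence
$$\widehat{(\ga_n)_*\lam_U}(\bm)=\widehat{\lam_U}(\ga_n^t\bm)=\mathbf{1}_{U^\perp}(\ga_n^t\bm),\qquad\bm\in\bZ^{d_2}.$$
Weak-$*$ convergence $(\ga_n)_*\lam_U\to\mu$ forces pointwise convergence of Fourier coefficients, and because the sequence takes values in $\set{0,1}$, it is eventually constant for each fixed $\bm$.

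This identifies $\mu$. Set $L:=\set{\bm\in\bZ^{d_2}:\ga_n^t\bm\in U^\perp\text{ eventually}}$. Linearity of $\ga_n^t$ makes $L$ a subgroup of $\bZ^{d_2}$. Moreover $L$ is saturated: if $k\bm\in L$ for some $k\in\bN$, then $k\ga_n^t\bm\in U^\perp$ eventually, and since $U^\perp$ is a real subspace this forces $\ga_n^t\bm\in U^\perp$, so $\bm\in L$. Letting $V:=(L\otimes\bR)^\perp$, a rational subspace of $\bR^{d_2}$, saturation gives $V^\perp\cap\bZ^{d_2}=L$, hence $\widehat\mu=\mathbf{1}_L=\widehat{\lam_V}$ and therefore $\mu=\lam_V$.

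For the mixing statement I reduce to testing on characters. A standard $3\veps$ argument (trigonometric polynomials are uniformly dense in $C(\bT^{d_2})$ and $L^1$-dense in $L^1(\lam_U)$, characters being uniformly bounded by $1$) shows that $\ga_n$ mixes $\lam_U$ to $\lam_V$ if and only if, for every $\bm\in\bZ^{d_2}$ and every $\bl\in\bZ^{d_1}$,
$$\int e^{2\pi i(\ga_n^t\bm+\bl)\cdot x}\,d\lam_U(x)=\mathbf{1}_{U^\perp}(\ga_n^t\bm+\bl)\longrightarrow\mathbf{1}_L(\bm)\mathbf{1}_{U^\perp}(\bl).$$
When $\bl\in U^\perp$ the left side reduces to $\mathbf{1}_{U^\perp}(\ga_n^t\bm)$ and convergence to $\mathbf{1}_L(\bm)$ is simply the definition of $L$; when $\bl\notin U^\perp$ the requirement is that $\ga_n^t\bm+\bl\in U^\perp$ for at most finitely many $n$.

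The final step translates this to the lemma's condition. Whether $\ga_n^t\bm+\bl\in U^\perp$ depends only on the orthogonal projection $\bk:=\bl|_U\in U$, and $\bk\neq 0$ precisely when $\bl\notin U^\perp$. Conversely, any $\bk\in U\setminus\set{0}$ with $\ga_n^t\bm+\bk\in U^\perp$ for some $n$ must equal $-(\ga_n^t\bm)|_U$, which, by rationality of $U$, automatically lies in the orthogonal projection of $\bZ^{d_1}$ onto $U$; hence some $\bl\in\bZ^{d_1}\setminus U^\perp$ realises $\bk$. So the mixing criterion of the previous paragraph is exactly the stated condition of the lemma. The step I expect to require the most care is the saturation of $L$ together with the ensuing identification $V^\perp\cap\bZ^{d_2}=L$; without it the candidate $\mu=\lam_V$ is not correctly pinned down, while once it is established the remainder is essentially Fourier bookkeeping.
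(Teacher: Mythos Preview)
Your argument is correct and follows the same Fourier-analytic template as the paper: both reduce the mixing statement to the behaviour of $\mathbf{1}_{U^\perp}(\ga_n^t\bm+\bk)$ on pairs of characters and read off the same dichotomy. The one substantive difference is in how the limit $V$ is identified. You observe that weak-$*$ convergence of $(\ga_n)_*\lam_U$ already forces each $\set{0,1}$-valued Fourier coefficient $\mathbf{1}_{U^\perp}(\ga_n^t\bm)$ to be eventually constant, so the group $L=\set{\bm:\ga_n^t\bm\in U^\perp\text{ eventually}}$ and the subspace $V=(L\otimes\bR)^\perp$ are defined directly for the full sequence; the saturation check then gives $\mu=\lam_V$ at once. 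The paper does not exploit the convergence hypothesis this way: it instead passes (via a diagonal argument) to a subsequence along which the eventual-constancy holds, proves mixing there, and then argues by contradiction that mixing must hold along the original sequence. Your route is cleaner and avoids the sub-subsequence bookkeeping; the paper's route has the mild advantage that it shows more explicitly why the limit $\mu$ must be a Haar measure without invoking saturation. Your final translation between $\bl\in\bZ^{d_1}\setminus U^\perp$ and $\bk\in U\setminus\set{0}$ via orthogonal projection is exactly the right way to match your character test with the lemma's stated condition, and it also clarifies that the $\bk$ in the statement effectively ranges over the projection of $\bZ^{d_1}$ onto $U$ rather than $U\cap\bZ^{d_1}$.
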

Before turning to the proof we make some clarifying remarks and work out some examples. First, the fact that $\mu$ is a Haar measure is not the essence of the lemma. It is an
easy exercise to prove that any weak$^*$ limit of Haar measures is again a Haar measure in this context (this fact is implicitly used in the proof below).
The true content of the lemma is the fact that in the absence of the obvious obstacle to mixing -- the existence of $\bm,\bk\ne 0$ such that 
$\ga_n^t\bm+\bk\in U^\perp$ infinitely often -- $\ga_n$ actually mixes $\lam_U$ to $\lam_V$, which is significantly stronger than the convergence
of $(\ga_n)_*\lam_U$ to $\lam_V$. 
Second, note that given any sequence $\ga_n:\bT^{d_1}\to \bT^{d_2}$, of homomorphisms,
and a Haar measure $\lam_U$ of a subtorus $U+\bZ^{d_1}\subset \bT^{d_1}$, one can always assume, after passing to a subsequence, that $(\ga_n)_*\lam_U$
converges  by the compactness of $\cP(\bT^{d_2})$. Third, note that one could rephrase the condition ensuring mixing, as saying that for
any $\bm\in \bZ^{d_2}$, either $\ga_n^t\bm$ is eventually in $U^\perp$, or the distance from $\ga_n^t\bm$ to $U^\perp$ goes to $\infty$. Finally we note
that when $d_1=d_2=2$, $U=\bR^{d_1}$, and the $\ga_n$'s are automorphisms, the lemma is proved in ~\cite{Bergelson-Gorodnik} Lemma 2.2. 
\begin{example}\label{example 1}
Let $\ga\in\on{GL}_d(\bZ)$ be an automorphism of the $d$-torus which has an irreducible characteristic polynomial all of whose roots are real. Then for any subtorus $U+\bZ^{d}$, the sequence $\ga^n_*\lam_U$ converges mixingly to the Haar measure of the full $d$-torus. In particular, for $\lam_U$ almost any point $v\in\bT^d$, the orbit $\set{\ga^nv}$, is dense in $\bT^d$.

We split the argument into two parts. In the first we show that if a subsequence $\ga^{n_i}_*\lam_U$ converges to some $\lam_V$, then $V=\bR^d$, which proves that $\ga^n\lam_U$ converges to $\lam_{\bR^d}$. In the second we show that the convergence is mixing. We rely on the following two properties of $\ga$ (resp.\ $\ga^t$):
\begin{enumerate}
\item $\ga$ (resp.\ $\ga^t$) is diagonalizable over $\bR$ with all eigenvalues distinct and of absolute value $\ne 1$. Hence, for any $v\in\bR^d$, $\ga^nv$ converges projectively to a one dimensional eigenspace of $\ga$. 
\item Any $\ga$-invariant (resp.\ $\ga^t$-invariant) subspace (which is necessarily a direct sum of eigenspaces), is not a subspace of any proper rational subspace (this follows from the irreducibility of the characteristic polynomial).\\
\end{enumerate}
\textbf{First step}: If $n_i$ is such that $\ga^{n_i}_*\lam_U$ converges to some $\lam_V$, then in particular, for any $v\in U$, any projective limit of $\ga^{n_i}v$, is a line in $V$. But, from (1) it follows that such a line is an eigenspace of $\ga$ and property (2) implies that $V$, which is a rational subspace, must equal $\bR^d$.\\
\textbf{Second step}: We need to show that there could not exist
integer vectors $\bm\in \bZ^d,0\ne \bk\in U$, such that $(\ga^t)^n\bm+\bk\in U^\perp$ for infinitely many $n$'s.  From property (1) we deduce that $(\ga^t)^n\bm$ converges projectively
to a 1-dimensional eigenspace of $\ga^t$ which is contained in $U^\perp$ (as $\bk$ is fixed and $(\ga^t)^n\bm$ must diverge), which is a rational proper subspace (as $U\ne\set{0}$). This contradicts property (2).
\end{example}
\begin{example}\label{example 2}
Let $\ga_n=\pa{\begin{array}{ll}
1&0\\
n&1
\end{array}}$. If $U=\set{(x,x)\in\bR^2: x\in\bR}$, then although $(\ga_n)_*\lam_U$ converges to the Haar measure of the full 2-torus, $\ga_n$ does
not mix $\lam_U$ to $\lam_{\bR^2}$. This is because, if we choose $\bm=(1,0)^t$, then $\bm$ is a fixed point of $\ga_n^t$, and is not in $U^\perp$, so we 
see that $\ga_n^t\bm$ is not eventually in $U^\perp$ and its distance from $U^\perp$ does not diverge. Indeed, Lemma ~\ref{mixing implies 1} does not apply and for $\lam_U$-almost any $v\in\bT^2$, the closure $\overline{\set{\ga_nv}}$ equals a coset of a lower dimensional subtorus. 
\end{example}
\begin{proof}[Proof of the Mixing lemma] 
For any $d$ and $\bm\in \bZ^d$, let $e_\bm(v)=e^{2\pi i<\bm,v>}:\bT^d\to\bC$. Note that in our notation there is no reference to the dimension and the reader should understand from the context what is the domain of the character $e_\bm$. In particular, there will be times in which
in one equation, two dimensions will be mixed. We need to find a rational subspace $V<\bR^{d_2}$, such that ~\eqref{mix 1} holds for any $f\in C(\bT^{d_2}), g\in L^1(\lam_U)$. A standard argument shows 
that it is enough to verify the validity of ~\eqref{mix 1} when $f$ and $g$ are chosen from sets spanning dense subspaces of 
$C(\bT^{d_2})$ and $L^1(\lam_U)$ respectively. From the Stone-Weierstrass theorem it follows that $\set{e_\bm:\bm\in\bZ^{d_2}}$ spans a dense subspace
of $C(\bT^{d_2})$. Also, the set $\set{e_\bk:\bk\in U}$ spans a dense subspace in $L^1(\lam_U)$. This is because it forms an orthonormal basis to $L^2(\lam_U)$, which is dense in $L^1(\lam_U)$ by the Cauchy Schwarz inequality.

We conclude that we need to find a rational subspace $V<\bR^{d_2}$, such that the following convergence holds
\begin{align}
\label{mix 2} \textrm{For any }  \bm\in\bZ^{d_2},\bk&\in U,\;\;  \int_{\bT^{d_1}}e_{\ga_n^t\bm+\bk}(v)d\lam_U(v)\\
\label{mix 3}  &=\int_{\bT^{d_1}}e^{2\pi i <\ga_n^t\bm+\bk,v>}d\lam_U(v)=\int_{\bT^{d_1}}e_\bm(\ga_nv)e_k(v)d\lam_U(v)\\
\label{mix 4} &\to\int_{\bT^{d_2}}e_\bm(v)d\lam_V(v)\int_{\bT^{d_1}}e_\bk(v)d\lam_U(v).
\end{align}
Given a rational subspace $V<\bR^d$ and a corresponding Haar measure $\lam_V$, a short calculation shows that for any $\bm\in\bZ^d$ we have
\begin{equation}\label{haar measures}
\int e_\bm(v)d\lam_V(v)=\bigg\{
\begin{array}{ll}
1& \textrm{ if }\bm\in V^\perp,\\
0& \textrm{ otherwise}.
\end{array}
\end{equation}
Working with equation ~\eqref{haar measures}, we conclude that for any choice of $V$, the values in ~\eqref{mix 2},~\eqref{mix 4} satisfy
\begin{equation*}
~\eqref{mix 2}=\bigg\{
\begin{array}{ll}
1& \textrm{ if }\ga_n^t\bm+\bk\in U^\perp,\\
0& \textrm{ otherwise}
\end{array}\;;\;
~\eqref{mix 4}=\bigg\{
\begin{array}{ll}
1& \textrm{ if }\bm\in V^\perp\textrm{ and }\bk=0,\\
0& \textrm{ otherwise}
\end{array}
\end{equation*}  
We conclude that given $\lam_V$, the sequence $\ga_n$ mixes $\lam_U$ to $\lam_V$ if and only if the values of ~\eqref{mix 2} and ~\eqref{mix 4}
agree for all large enough $n$'s. This implies immediately the only if part of the lemma; if there exist $\bm\in\bZ^{d_2},0\ne \bk\in U$, such that $\ga_n^t\bm+\bk\in U^\perp$ for infinitely many $n$'s, then for these values of $n$, the value of ~\eqref{mix 2} is 1, while the value
of ~\eqref{mix 4} is 0. 

We are left to prove the if part. Assume then that for any $\bm\in \bZ^{d_2}$ and $0\ne \bk\in U$, $\ga_n^t\bm+\bk$ is eventually outside $U^\perp$.
We shall conclude the proof by showing that for any subsequence of $\ga_n$, there exists yet another subsequence $\tilde{\ga}_n$, and a subspace $V$, 
such that $\tilde{\ga}_n$ mixes $\lam_U$ to $\lam_V$. To see why this concludes the proof, note first that this implies that $\mu=\lam_V$, as we assume that $(\ga_n)_*\lam_U$ converges to $\mu$, hence any subsequence of it converges to $\mu$ too. In particular, this shows that $V$ does not depend on the 
initial subsequence. Second, if $\ga_n$ does not mix $\lam_U$ to $\lam_V$, then the above discussion shows that there must exist integer vectors
$\bm\in\bZ^d$ and $\bk\in U$, such that for infinitely many $n$'s, the value in ~\eqref{mix 2} is different from the value in ~\eqref{mix 4}; we obtain a 
contradiction once we take as the initial subsequence, those $\ga_n$'s for which the values in ~\eqref{mix 2} and ~\eqref{mix 4} do not agree, seeing that there could not be a subsequence $\tilde{\ga}_n$ of that subsequence, which mixes $\lam_U$ to $\lam_V$.

To this end, let a subsequence of $\ga_n$ be given. By a standard diagonal argument, it has a subsequence $\tilde{\ga}_n$ such that for any $\bm\in\bZ^{d_2}$, either $\tilde{\ga}_n^t\bm\in U^\perp$ for all but finitely many $n$'s, or $\tilde{\ga}_n^t\bm\notin U^\perp$ for all but finitely many $n$'s.
Define 
\begin{align}\label{V 1}
V^\perp&=\on{span}\set{\bm\in\bZ^{d_2}: \tilde{\ga}_n^t\bm\in U^\perp \textrm{ for all large }n's}\\
 \nonumber &=\on{span}\set{\bm\in\bZ^{d_2}: \tilde{\ga}_n^t\bm\in U^\perp \textrm{ for infinitely many }n's}.
\end{align}
We remark	 that the above set of integer vectors spanning $V^\perp$ is clearly a group and in fact it is the intersection of $V^\perp$ with $\bZ^{d_2}$; that is,
if $\bm\in V^\perp$ is an integer vector, then $\tilde{\ga}_n^t\bm\in U^\perp$ for all large enough $n$'s.

We now check that along the subsequence $\tilde{\ga}_n$, the values of ~\eqref{mix 2},~\eqref{mix 4} agree for all large enough $n$'s, hence concluding that $\tilde{\ga}_n$ mixes $\lam_U$ to $\lam_V$ as desired. There are two cases in which the values in ~\eqref{mix 2},~\eqref{mix 4} do not 
agree (with $\ga_n$ replaced by $\tilde{\ga}_n$). 
\begin{enumerate}
	\item\label{mix 5} Either $\tilde{\ga}_n^t\bm+\bk\notin U^\perp$ and $\bk=0$ and $\bm\in V^\perp$,
	\item\label{mix 6} or $\tilde{\ga}_n^t\bm+\bk\in U^\perp$ but either $\bk\ne 0$ or $\bm\notin V^\perp$.
\end{enumerate}
It follows from the remark made after the definition of $V^\perp$, that ~\eqref{mix 5} cannot happen infinitely many times. Assume ~\eqref{mix 6}
holds for infinitely many $n$'s. It cannot be that $\bk\ne 0$ because of our assumption that $\ga_n^t\bm+\bk$ is eventually outside $U^\perp$, for any $\bm$ and nonzero $\bk$. On the other hand, it cannot be that $\bk=0$, by our construction of $\tilde{\ga}_n$ and the definition of $V^\perp$, as it would mean that eventually $\tilde{\ga}_n^t\bm\in U^\perp$, hence $\bm\in V^\perp$. In any case we arrive to a contradiction as desired.
\end{proof}
We now use the Mixing lemma to prove another lemma which serves as the main tool used to prove Theorem ~\ref{main theorem}. As in the case of the mixing
lemma, this lemma is of independent interest on its own.
\begin{lemma}[Coset lemma]\label{c.l}
Let $U<\bR^{d_1}$ be a subspace, rational with respect to $\bZ^{d_1}$. Let $\ga_n:\bT^{d_1}\to\bT^{d_2}$ be a sequence of homomorphisms induced by the matrices $\ga_n\in \on{Mat}_{d_1\times d_2}(\bZ)$, such that the restrictions $\ga_n|_U$ form an infinite set.
Then there exists a rational subspace $\set{0}\ne V<\bR^{d_2}$, such that for $\lam_U$-almost any $v\in\bT^{d_1}$, the closure $\overline{\set{\ga_nv:i\in \bN}}$ contains a coset of the subtorus $V+\bZ^{d_2}$
\end{lemma}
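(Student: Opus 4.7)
My plan is to induct on $d_2$, using the Mixing lemma as the workhorse and a difference-sequence trick to reduce the target dimension whenever the mixing criterion fails. After passing to a subsequence along which the restrictions $\ga_n|_U$ are pairwise distinct (possible since the restriction set is infinite) and along which $(\ga_n)_*\lam_U$ converges weak$^*$ to some $\mu$ (by compactness of $\cP(\bT^{d_2})$), the Mixing lemma gives $\mu=\lam_W$ for a rational subspace $W\subset\bR^{d_2}$. A character computation rules out $W=\{0\}$: writing $W_n=\ga_n(U)$, each $\lam_{W_n}$ assigns $1$ to $e_\bm$ exactly when $\bm\in W_n^\perp$ and $0$ otherwise, so $\lam_{W_n}\to\delta_0$ would force any fixed $\bm\in\bZ^{d_2}$ to lie in $W_n^\perp$ eventually, hence, applying this to a basis, $W_n=\{0\}$ and $\ga_n|_U=0$ for large $n$, violating distinctness.

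If no integer vectors $\bm\in\bZ^{d_2}$ and $0\ne\bk\in U$ satisfy $\ga_n^t\bm+\bk\in U^\perp$ for infinitely many $n$, then by the Mixing lemma $\ga_n$ mixes $\lam_U$ to $\lam_W$, and Lemma~\ref{mixing implies 1} ensures that for $\lam_U$-almost every $v$ the closure $\overline{\{\ga_nv\}}$ contains $\on{supp}(\lam_W)=W+\bZ^{d_2}$, which is itself a coset of the subtorus $V+\bZ^{d_2}$ with $V=W$.

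Otherwise, fix such $\bm,\bk$ and an infinite set $\cN\subset\bN$ on which the obstruction holds; note $\bm\ne 0$, as otherwise $\bk\in U\cap U^\perp=\{0\}$. Fix $n_0\in\cN$ and set $\be_n:=\ga_n-\ga_{n_0}$ for $n\in\cN$. The relation $\be_n^t\bm\in U^\perp$ translates to $\be_n(U)\subset\bm^\perp$, so the $\be_n$ define integer-matrix homomorphisms from $\bT_U:=(U+\bZ^{d_1})/\bZ^{d_1}$ into the codimension-one subtorus $T':=(\bm^\perp+\bZ^{d_2})/\bZ^{d_2}\cong\bT^{d_2-1}$, with infinite set of restrictions (distinctness is inherited from the $\ga_n|_U$). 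The inductive hypothesis applied at target dimension $d_2-1$, taking the full domain as the ambient rational subspace, yields a nonzero rational $V\subset\bm^\perp$ such that for $\lam_U$-almost every $v$, $\overline{\{\be_nv\}}$ contains a coset of $V+\bZ^{d_2}$. Translating by $\ga_{n_0}(v)$ produces a coset of $V+\bZ^{d_2}$ inside $\overline{\{\ga_nv:n\in\bN\}}$, completing the step. The base case $d_2=1$ falls into the no-obstruction case automatically, because an obstruction there would force $\be_n|_U\subset\bm^\perp=\{0\}$, making $\ga_n|_U$ constant along $\cN$ and contradicting distinctness. The main delicate point will be the bookkeeping around subtori: verifying that the subtorus of $T'$ produced by induction realizes the same subtorus $V+\bZ^{d_2}$ inside $\bT^{d_2}$, and that distinctness of restrictions survives the various subsequence selections.
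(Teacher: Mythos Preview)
Your argument is correct and follows the same overall strategy as the paper: induct on $d_2$, invoke the Mixing lemma together with Lemma~\ref{mixing implies 1} in the no-obstruction case, and in the obstruction case use the vector $\bm$ to drop the target dimension by one. The execution of the reduction differs, however. The paper performs an explicit change of variables in both domain and target to arrange $U=\bR^r\times\{0\}$, $\bk=-\ell e_1$, $\bm=e_1$, and then deletes the first row of $\ga_n$; this keeps the domain $\bT^{d_1}$ but forces a separate induction on $d_1$ to handle the base case $d_2=1$. Your difference trick $\be_n=\ga_n-\ga_{n_0}$ accomplishes the same reduction more cleanly: it forces $\be_n(U)\subset\bm^\perp$ without any coordinate normalization, and it makes transparent that for $d_2=1$ the obstruction case is actually vacuous (since $\bm^\perp=\{0\}$ would make $\ga_n|_U$ constant along $\cN$), so the paper's nested induction on $d_1$ is unnecessary. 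The price you pay is the bookkeeping you flag: one must choose $\bZ$-bases of $U\cap\bZ^{d_1}$ and $\bm^\perp\cap\bZ^{d_2}$ to realize $\be_n|_{T_U}$ as integer matrices $\bT^{r}\to\bT^{d_2-1}$, and then check that the subtorus produced by induction inside $T'$ is indeed of the form $V+\bZ^{d_2}$ for a rational $V\subset\bm^\perp$. Both points are routine (rational subspaces meet $\bZ^{d}$ in full-rank lattices, and closed connected subgroups of $T'$ are subtori of $\bT^{d_2}$), so the argument goes through.
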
 
Note that in the above lemma, the subtorus $V+\bZ^{d_2}$ is fixed, while its coset depend on the initial point.
\begin{proof}
Along the argument we will take subsequences a large number of times. 
We will abuse notation and continue to denote the subsequences by the same symbols.
The proof goes by induction on $d_2$ but we first make some general observations.
First, by taking a subsequence, we may assume that the restrictions $\ga_n|_U$ are distinct and that $(\ga_n)_*\lam_U$ converges. 
If there are no integer vectors $\bm\in \bZ^{d_2},0\ne \bk\in U$, such that
$\ga_n^t\bm+\bk \in U^\perp$ for infinitely many $n$'s, then the Mixing lemma implies that $\ga_n$ mixes $\lam_U$ to some $\lam_V$. Of course
$V\ne\set{0}$, as equality would imply that $U=\set{0}$ which contradicts our assumption that the restrictions $\ga_n|_U$ are distinct. Lemma 
~\ref{mixing implies 1} now tells us that for $\lam_U$-almost any $v\in\bT^{d_1}$, the closure $\overline{\set{\ga_n v}}$ actually contains the 
subtorus $V+\bZ^{d_2}$ and the lemma follows with the additional information that the coset we gain in the closure does not depend on the initial point. Assume now that there exits integer vectors $\bm\in\bZ^{d_2}, 0\ne \bk\in U$, such that $\ga_n^t\bm+\bk\in U^\perp$ for infinitely many $n$'s. By passing to a subsequence we may assume this happens for all $n$. Note that the validity of the lemma is stable under a change of variable, that is if $A\in\SL_{d_1}(\bZ),B\in\SL_{d_2}(\bZ)$ are two 
automorphisms of $\bT^{d_i}$, then the lemma is true for $U,\ga_n$, if and only if it is true for $A^{-1}U,B\ga_nA$. It follows that we might assume that $U$ is the standard embedding of $\bR^r$ in $\bR^{d_1}$ for some $1\le r\le d_1$ (i.e.\ $U$ is the subspace corresponding to the first $r$ coordinates), and that $\bk\in U$ is colinear to $e_1$, i.e.\ there exists an integer $\ell$ such that 
$\bk=-\ell e_1=(-\ell,0,\dots,0)^t\in\bZ^{d_1}$. This is done by choosing $A$ properly.
By choosing $B$ properly, we see that we may assume that $\bm=e_1$. Indeed, $\bm$ was chosen to be an integer vector satisfying $\ga_n^t\bm+\bk\in U^\perp.$ Any choice of $B$ results in replacing the original $\bm$ by $(B^t)^{-1}\bm$. Choosing $B$ properly, we can guarantee that $\bm$ is an integer multiple of 
$e_1$, but as it can be chosen to be primitive, we may assume it actually equals $e_1$. To summarize, if we assume that $\ga_n$ does not mix $\lam_U$
to some $\lam_V$, we may assume we are in the following position: 
\begin{enumerate}
	\item The space $U$ equals the standard copy of $\bR^r$ in $\bR^{d_1}$.
	\item The matrices $\ga_n$ satisfy $\ga_n^t e_1-\ell e_1 \in U^\perp$. This means that  the first row of the matrix
	$\ga_n$ is of the form $(\underbrace{\ell,0,\dots,0}_{r},*\dots *)$.
\end{enumerate} 
The particular shape of the first row of the $\ga_n$'s is what enables us to have a reduction to a lower dimension and invoke the inductive hypothesis.
We now describe the proof by induction.\\
\textbf{The case} $d_2=1$: We prove this case by induction on $d_1$. Note that in this case there is just one possibility for $V$, and only one coset;
 this means that we need to prove that for $\lam_U$-almost any $v\in\bT^{d_1}$, the sequence $\ga_nv$, is dense in $\bT^1$. For $d_1=1$, $\ga_n$ are simply integer numbers and $U=\bR$. The fact that we assume that $\ga_n$ are all distinct imply that $\ga_n$ always mixes $\lam_U$ to itself and we are done as explained above. Assume we know the lemma
for $d<d_1$ and $d_2=1$. If $\ga_n$ mixes $\lam_U$ then we are done as explained above. If not, then we may assume after a change of variable, 
as explained above, that 
the first coordinate of the matrix $\ga_n$ (which is a row vector), is fixed and equals to some integer $\ell$. In particular, this forces the dimension of $U$ to be $\ge 2$, as we assume that the restrictions $\ga_n|_U$ are all distinct. The lemma now clearly follows from the
inductive hypothesis, as given a vector $v=(v_1,\dots,v_r,0\dots 0)^t\in U$, we know that for almost any $v'=(v_2,\dots,v_r,0\dotso 0)^t$, the sequence $\ga'_nv'$ is dense in $\bT^1$ (here $\ga_n'$ are the $\ga_n$'s after we erase the first coordinate). But then we see
that for $\lam_U$-almost any $v\in U$, the sequence $\ga_nv=\ell v_1+\ga_n'v'$ is dense in $\bT^1$. This finishes the case $d_2=1$ which is the base of
our induction.\\
\textbf{The inductive step}: Assume we know the lemma when the dimension of the image torus is less than $d_2$. If $\ga_n$ mixes $\lam_U$ then we are done as explained above. If not, then after a change of variable we may assume that the $\ga_n$'s are of the form
\begin{equation}\label{matrix dec}
\ga_n=\pa{\begin{array}{ll}
\ell& s_n\\
p_n&q_n
\end{array}},
\end{equation}
where $p_n$ is a column vector, $s_n=(\underbrace{0,\dots,0}_{r-1},*\dots *)$, and $q_n$ are matrices of the appropriate dimension. Let us denote 
by $\ga_n'$ the matrices obtained from $\ga_n$ by erasing the first row. We view them as homomorphisms from $\bT^{d_1}$ to $\bT^{d_2-1}$. The restrictions of $\ga_n'$ to $U$ (which is the copy of $\bR^r$ in $\bR^{d_1}$) must all be distinct, as we know this for the $\ga_n$'s, and their 
first row is identical, as far as $U$ is concerned. From the inductive hypothesis we conclude that there exists a nontrivial subtorus $V+\bZ^{d_2-1}$ in $\bT^{d_2-1}$, such that for $\lam_U$-almost any $v\in \bT^{d_1}$, the closure $\overline{\set{\ga_n'v}}$ contains a coset, $w'_v+(V+\bZ^{d_2-1})$, where $w'_v\in\bR^{d_2-1}$.
We conclude that for such $v$'s, the closure $\overline{\set{\ga_nv}}$ contains the coset $w_v+(V+\bZ^{d_2})$, where $w_v$ is obtained from $w_v'$
by adding to it as a first coordinate $\ell v_1$. This concludes the proof.
\end{proof}
We work out a few examples to develop a feeling about the null set of points which corresponds to closures which do not contain a nontrivial coset.
A first example is given in Example ~\ref{example 2}, where the points $v=(x,x)^t\in U$ for which the closure of the sequence $\ga_nv=(x,(n+1)x)$,
in $\bT^2$, does not contain a coset of the subtorus corresponding to the second coordinate, are exactly those with $x\in\bQ$, which clearly form a
$\lam_U$-null set. As a slightly more complicated example we have
\begin{example}\label{example 3}
Let Let $\ga_n=\pa{\begin{array}{ll}
1&0\\
n&n
\end{array}}$, and $U=\bR^2$. Then for $v=(x,y)^t$, the closure of the sequence $\ga_nv=(x,n(x+y))$ equals a coset of the subtorus corresponding to 
the second coordinate, if and only if $v$ does not lie on a line of the form $\set{v:x+y=q}$ for some $q\in\bQ$. This countable collection of lines
is clearly a $\lam_U$-null set but still a dense set of dimension 1.
\end{example}
\begin{example}
Let $M$ be a diagonalizable epimorphism of the $d$-torus. When we apply the Coset lemma to the case where $\ga_n=M^n$, it follows that for Lebesgue almost any $v$, the sequence $M^nv$ contains in its closure a full coset of some nontrivial subtorus. Recently it was proved in ~\cite{BFK} (see also ~\cite{BFKW}), that for any 
given $w$, the set $\set{v\in\bT^d: \inf_n\on{d}(M^nv,w)>0}$ is dense and of full dimension. In fact, they show it is a winning set for Schmidt's game  (this generalizes Dani's work ~\cite{Dani-endo-of-tori}). In particular,
the set of points $v\in\bT^d$ which violate the conclusion of the Coset lemma, although is null, is still large from other perspectives. 
\end{example}
\section{Proofs of Theorem \ref{main theorem} and Corollary ~\ref{product 1}}\label{pfs}
We begin by rephrasing the Coset lemma, when we let the lattice vary. We shall use the following notation: For a grid $y=x+w\in Y_d$ and $v\in\bR^d$,
we denote by $y+v$, the grid $x+(w+v)$. Given a subspace $V<\bR^d$, we let $y+V=\set{y+v:v\in V}\subset \pi^{-1}(x)$. Thus $y+V$ is simply the coset of 
$V$ passing through the grid $y$, in the torus $\pi^{-1}(x)$.
\begin{lemma}\label{coset 2}
Let $x_1,x_2\in X_d$ be two lattices and $h_n\in G$ be such that $h_nx_1\to x_2$. Let $U<\bR^d$ be a rational subspace with respect to $x_1$ and $w\in\bR^d$ be given. Assume
that $h_n$ is not almost finite with respect to $U$ (in the sense of Definition ~\ref{a.f}). Then, there exists a subspace $\set{0}\ne V<\bR^d$, rational with respect to $x_2$, such that for $\lam_{w+U}$-almost any grid $y\in\pi^{-1}(x_1)$, the closure $\overline{\set{h_ny:n\in\bN}}$,
contains a coset $z_y+V$, for some $z_y\in\pi^{-1}(x_2)$.
\end{lemma}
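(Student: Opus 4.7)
The plan is to reduce Lemma~\ref{coset 2} to the Coset lemma (Lemma~\ref{c.l}) on the standard torus $\bT^d$ by a change of variable. Two complications need care: (a) the lattice underlying the sequence of grids varies (from $h_n x_1$ to the limit $x_2$) rather than being fixed, and (b) the measure $\lam_{w+U}$ is supported on a coset of a subtorus, whereas the Coset lemma is phrased using the Haar measure $\lam_U$ of the subtorus itself.

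I would begin by fixing representatives $g_1,g_2\in G$ with $x_i=g_i\bZ^d$. The convergence $h_n x_1\to x_2$ in $X_d$ yields a sequence $\ga_n\in\Ga$ with $h_n g_1\ga_n\to g_2$; writing $\tilde\ga_n=\ga_n^{-1}$ we obtain $h_n g_1=g_2\eps_n\tilde\ga_n$ with $\eps_n\to e$ in $G$. The left multiplications by $g_1,g_2$ identify $\pi^{-1}(x_i)\cong\bT^d$, and under these identifications the map $y\mapsto h_n y$ becomes the small perturbation $\eps_n\tilde\ga_n$ of the toral endomorphism $\tilde\ga_n\in\SL_d(\bZ)$. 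Setting $U'=g_1^{-1}U$ and $w'=g_1^{-1}w$, the subspace $U'$ is rational with respect to $\bZ^d$, and $\lam_{w+U}$ corresponds to $\lam_{w'+U'}$ on $\bT^d$.

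The hypothesis on non-almost-finiteness feeds directly into the hypothesis of the Coset lemma: if the restrictions $\set{\tilde\ga_n|_{U'}}$ were a finite set, then with $\delta_n=g_2\eps_n^{-1}g_2^{-1}\to e$ the set
\begin{equation*}
\set{\delta_n h_n|_U}=\set{g_2\tilde\ga_n g_1^{-1}|_U}
\end{equation*}
would also be finite, contradicting the assumption that $h_n$ is not almost finite with respect to $U$. Hence $\set{\tilde\ga_n|_{U'}}$ is infinite. To handle the translation $w'$, I would pass to a subsequence along which $\tilde\ga_n w'\pmod{\bZ^d}$ converges in $\bT^d$ to some $w''$; this uses only compactness of $\bT^d$, and by further thinning we still have $\set{\tilde\ga_n|_{U'}}$ infinite. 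Applying the Coset lemma to the thinned sequence with subtorus $U'+\bZ^d$ produces a nontrivial rational subspace $V'<\bR^d$ such that for $\lam_{U'}$-almost every $v\in\bT^d$ the closure $\overline{\set{\tilde\ga_n v}}$ contains a coset of $V'+\bZ^d$. Using $\tilde\ga_n(w'+v)=\tilde\ga_n w'+\tilde\ga_n v$ and adding $w''$ to such a coset, it follows that for $\lam_{w'+U'}$-almost every $\tilde v\in\bT^d$ the closure $\overline{\set{\tilde\ga_n\tilde v}}$ contains a coset of $V'+\bZ^d$. Finally, transporting back by $g_2$ and noting that the factors $\eps_n\to e$ contribute only vanishing perturbations in $Y_d$, I obtain the claim with $V=g_2 V'$, a nontrivial subspace rational with respect to $x_2$, and the subtorus $V+x_2\subset \pi^{-1}(x_2)$.

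The main technical obstacle is item (b): reconciling the translated measure $\lam_{w+U}$ with the subtorus-supported measure for which Lemma~\ref{c.l} is phrased. This is what forces the subsequence argument and the explicit tracking of the torus-limit $w''$ of $\tilde\ga_n w'$; everything else is a careful but essentially routine change of coordinates between the lattices $x_1$, $x_2$, $h_n x_1$ and the standard torus.
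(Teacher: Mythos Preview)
Your proposal is correct and follows essentially the same route as the paper: conjugate by representatives $g_1,g_2$ to land on the standard torus, translate non-almost-finiteness into infiniteness of the restrictions $\tilde\ga_n|_{U'}$, apply the Coset lemma, and push back via $g_2$ while absorbing the $\eps_n\to e$ perturbation. The only cosmetic difference is that the paper disposes of the translate $w$ in one sentence (pass to a subsequence so that $h_n(w+x_1)$ converges, reducing to the $\lam_U$ case), whereas you carry out this reduction explicitly by tracking $\tilde\ga_n w'\to w''$ in $\bT^d$; the two are the same maneuver. One small point of care: to guarantee that the thinned sequence still has infinite restriction set, first thin so that the $\tilde\ga_n|_{U'}$ are pairwise distinct and only then extract a convergent subsequence for $\tilde\ga_n w'$.
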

\begin{proof}
We first note that after passing to a subsequence, we may assume that $h_n(w+x_1)$ converges to some grid in $\pi^{-1}(x_2)$. This clearly reduces
the lemma to the case where we are dealing with a Haar measure of a subtorus $\lam_U$, rather than a translate of it $\lam_{w+U}$.

Let $g_i\in G$ be such that $x_i=g_i\Ga$ for $i=1,2$. There exists a sequence $\eps_n\in G$, with $\eps_n\to e$ such that $\eps_nh_nx_1=x_2$, or 
in other words, $\ga_n=g_2^{-1}\eps_nh_ng_1\in \Ga$. Let $U'=g_1^{-1}U$. Then $U'$ is rational with respect to $\bZ^d$. The fact that $h_n$ is not 
almost finite with respect to $U$, translates to the fact that the restrictions $\ga_n|_{U'}$ form an infinite set. Hence, the Coset lemma applies and gives us the existence of a subspace $\set{0}\ne V'<\bR^d$, rational with respect to $\bZ^d$, such that for $\lam_{U'}$-almost any grid $y'$ of $\bZ^d$, the closure of the sequence $\ga_ny'$ contains a coset of the subtorus $V'+\bZ^d$. This translates to the fact that for $V=g_2V'$, and $\lam_U$-almost any
grid $y\in\pi^{-1}(x_1)$, the closure of the sequence $\eps_nh_ny$, contains a coset of the subtorus $V+x_2$. This implies the same statement for the sequence $h_ny$, as $\eps_n\to e$, and the lemma follows.   
\end{proof}
Before turning to the proof of Theorem ~\ref{main theorem}, we note that in the terminology introduced at the beginning of this section, we have that a continuous function $F:\bR^d\to\bR$ is nondegenerate in the sense of Definition ~\ref{nondeg}, if and only if, for any grid $y\in Y_d$ and any subspace $\set{0}\ne V<\bR^d$ one has $\cup_{y'\in\set{ y+V}}V_F(y')=F(\bR^d)$.
\begin{proof}[Proof of Theorem ~\ref{main theorem}]
Let $F,\; x,\; U,\;w$, and $h_n$, be as in the statement. Denote $x=x_1$ and $\lim h_n x_1=x_2$. Lemma ~\ref{coset 2} implies that there exists a nontrivial subspace $V<\bR^d$, such that for $\lam_{w+U}$-almost any $y\in\pi^{-1}(x_1)$, the closure  $\overline{H_Fy}$, contains a full coset 
$z_y+V$, for $z_y\in\pi^{-1}(x_2)$. The Inheritance lemma (Lemma ~\ref{inheritance}) implies now that such a grid satisfies
$$V_F(y)\supset\cup_{y'\in \set{z_y+V}} V_F(y')=F(\bR^d),$$
which concludes the proof.
\end{proof}
\begin{proof}[Proof of Corollary ~\ref{product 1}]
Let $x$, $U$, and $w$, be as in the statement. From the classification of divergent $H_F$-orbits (see ~\cite{TW} and \S\ref{main results}), we see that as $x$ does not contain any vectors on the axes, it has a nondivergent $H_F$-orbit. Let $h_n\in H_F$ be a diverging sequence such that $h_nx$
converges. We only need to argue why the sequence $h_n$ cannot be almost finite with respect to $U$. Indeed, assume that $h_n$ is almost finite with respect to $U$. After passing to a subsequence and possibly permuting the coordinates (which we ignore in order to ease our notation), we can assume
that $h_n=\diag{e^{t_1^{(n)}},\dots,e^{t_d^{(n)}}}$, where for some $1\le r\le d$ we have that $t_i^{(n)}$ diverge for $i\le r$ and converge for $i>r$.
It follows that as $h_n$ is almost finite with respect to $U$, we must have $U<\set{(\underbrace{0,\dots,0}_r,*\dots *)^t\in\bR^d}$. This leads to a 
contradiction, as $U$ is rational with respect to $x$, and we assumed that $x$ does not contain any point on the hyperplanes of the axes. 
\end{proof}
\begin{acknowledgments}
I would like to express my gratitude to many people who contributed to this paper in one way or another. Some by stimulating conversations, others by
sharing references. Thanks are due to Hillel Furstenberg, Barak Weiss, Elon Lindenstrauss, Manfred Einsiedler, Alexander Gorodnik, Dmitry Kleinbock,
Yann Bugeaud, and Fabrizio Polo.
\end{acknowledgments}
\def\cprime{$'$}

\end{document}